\documentclass[11pt]{article}
\usepackage{geometry,amsmath,amsfonts,amssymb,amsthm,color,mathrsfs,indentfirst}
\usepackage[colorlinks=true]{hyperref}
\geometry{left=2.5cm,right=2.5cm,top=2.5cm,bottom=2.5cm}
\numberwithin{equation}{section}
\bibliographystyle{plain}
\newtheorem{theorem}{Theorem}[section]
\newtheorem{lemma}{Lemma}[section]

\allowdisplaybreaks

\begin{document}
\vspace{5cm}


\title{{Global classical solutions for a class of reaction-diffusion
system with density-suppressed motility}}
\author{Wenbin Lyu \thanks{%
School of Mathematical Sciences, Shanxi University, Taiyuan 030006, P.R. China; lvwenbin@sxu.edu.cn. }  \and Zhi-An Wang
\thanks{%
Department of Applied Mathematics, The Hong Kong Polytechnic University, Hong Kong; mawza@polyu.edu.hk. }}
\date{}
\maketitle

%
%

\vspace{0.5cm}

{\noindent\bf Abstract.}
This paper is concerned with a class of reaction-diffusion system with density-suppressed motility
\begin{equation*}
\begin{cases}
u_{t}=\Delta(\gamma(v) u)+\alpha u F(w), & x \in \Omega, \quad t>0, \\
v_{t}=D \Delta v+u-v, & x \in \Omega, \quad t>0, \\
w_{t}=\Delta w-u F(w), & x \in \Omega, \quad t>0,
\end{cases}
\end{equation*}
under homogeneous Neumann boundary conditions in a smooth bounded domain $\Omega\subset \mathbb{R}^n~(n\leq 2)$, where $\alpha>0$ and $D>0$ are constants. The random motility function $\gamma$ satisfies
\begin{equation*}
\gamma\in C^3((0,+\infty)),\ \gamma>0,\ \gamma'<0\,\ \text{on}\,\ (0,+\infty) \ \ \text{and}\ \ \lim_{v\rightarrow+\infty}\gamma(v)=0.
\end{equation*}
The intake rate function $F$ satisfies
\begin{equation*}
F\in C^1([0,+\infty)),\,F(0)=0\,\ \text{and}\ \,F>0\,\ \text{on}\,\ (0,+\infty).
\end{equation*}
We show that the above system admits a unique global classical solution for all non-negative initial data
$$
u_0\in C^0(\overline{\Omega}),\,v_0\in W^{1,\infty}(\Omega),\,w_0\in W^{1,\infty}(\Omega).
$$
 Moreover, if there exist $k>0$ and $\overline{v}>0$ such that
\begin{equation*}
\inf_{v>\overline{v}}v^k\gamma(v)>0,
\end{equation*}
then the global solution is bounded uniformly in time.
\vspace{0.2cm}

{\noindent\bf Keywords}: Reaction-diffusion system, Density-suppressed motility, Global existence,  Boundedness.
\vspace{0.2cm}

{\noindent\bf 2020 Mathematics Subject Classification.}: 35A01, 35B45, 35K51, 35Q92.


\section{Introduction and main results}

To explain the strip pattern formation observed in the experiment of \cite{L.2011S} induced by the ``self-trapping mechanism'', the following three-component reaction-diffusion system  with density-dependent motility was proposed in \cite{L.2011S}
\begin{equation}\label{dsm}
\begin{cases}
u_{t}=\Delta(\gamma(v) u)+\frac{\alpha w^2 u}{w^2+\lambda} , & x \in \Omega, \quad t>0, \\
v_{t}=D \Delta v+u-v, & x \in \Omega, \quad t>0, \\
w_{t}=\Delta w-\frac{\alpha w^2 u}{w^2+\lambda}, & x \in \Omega, \quad t>0,
\end{cases}
\end{equation}
where $u(x,t),v(x,t),w(x,t)$ denote the bacterial cell density, concentration of acyl-homoserine lactone (AHL) and nutrient density, respectively; $\alpha,\lambda,D>0$ are constants and $\Omega$ is  a bounded domain in $\mathbb{R}^n$. The first equation of \eqref{dsm} describes the random motion of bacterial cells with AHL-density dependent motility coefficient $\gamma(v)$, and cell growth due to the nutrient intake. The second equation of \eqref{dsm} describes the  diffusion, production and turnover of AHL, while the third equation gives the dynamics of the nutrient with diffusion and consumption. Simultaneously  a simplified two-component system was discussed in the supplemental material of \cite{L.2011S} and formally analyzed in \cite{F.2012PRL}:
\begin{equation}\label{OR-2}
\begin{cases}
u_t=\Delta(\gamma(v)u)+\mu u(1-u),&x\in\Omega, t>0,\\
v_t=D\Delta v+u-v,& x\in \Omega, t>0,
\end{cases}
\end{equation}
where the decay of bacterial cells at high density was used to approximate the nutrient depletion effect. A striking feature of systems \eqref{dsm} and \eqref{OR-2} is that the cell diffusion rate depends on a motility function $\gamma(v)$ satisfying $\gamma'(v)<0$, which takes into account the repressive effect of AHL concentration on the cell motility (cf. \cite{L.2011S}). The density-suppressed motility mechanism  has also been used to model other biological processes, preytaxis \cite{KO-1987,J-W2020EJAM} and chemotaxis \cite{KS-1971-JTB2, Z.A.W.2020PRE}.
 From the expansion
$$\Delta (\gamma(v)u)=\nabla \cdot (\gamma(v)\nabla u+u\gamma'(v) \nabla v)=\gamma(v) \Delta u+2 \gamma'(v)\nabla v\cdot\nabla u+u\gamma''(v)|\nabla v|^2+u\gamma'(v)\Delta v,$$
we see that the nonlinear diffusion rate function $\gamma(v)$ not only contributes a cross-diffusion structure but also
renders a possible diffusion degeneracy (i.e., $\gamma(v) \to 0$ as $v \to +\infty$). Therefore many conventional methods are inapplicable and the analysis of \eqref{dsm} or \eqref{OR-2} is very delicate. The progresses were not made to the system \eqref{OR-2} until recently with homogeneous Neumann boundary conditions in a smooth bounded domain $\Omega \subset \mathbb{R}^n$. The existing results on \eqref{OR-2} can be classified into two cases:  $\mu>0$ and $\mu=0$, to be recalled below.

{\bf Case of $\mu>0$}. When the motility function $\gamma(v)$ satisfies
$\gamma(v)\in C^3([0,\infty)),\ \gamma(v)>0\ \ \mathrm{and}\ \ \gamma'(v)<0\ \mathrm{on}\ [0,\infty)$, $\lim\limits_{v \to \infty}\gamma(v)=0$ and $\lim\limits_{v \to \infty}\frac{\gamma'(v)}{\gamma(v)}$ exists, it was first shown in \cite{J-K-W2018} that the system \eqref{OR-2} has a unique global classical solution in two dimensions ($n=2$) which globally asymptotically converges to the equilibrium $(1,1)$  if
$\mu>\frac{K_0}{16}$ with $K_0=\max\limits_{0\leq v \leq \infty}\frac{|\gamma'(v)|^2}{\gamma(v)}$. The global existence result and large-time behavior were subsequently extended to higher dimensions ($n\geq 3$) for large $\mu>0$ in \cite{W-W2019JMP} and \cite{L-X2019}. The condition that $\lim\limits_{v \to \infty}\frac{\gamma'(v)}{\gamma(v)}$ exists imposed in \cite{J-K-W2018} was recently removed in \cite{F-J2020JDE,J-W2020PRE} for the global existence of solutions. On the other hand, for small $\mu>0$, the existence/{\color{black}nonexistence} of nonconstant steady states  of \eqref{OR-2} was rigorously established in \cite{M-P-W2020PD, X-W2020PRE} in appropriate parameter regimes. Some other works with generalized logistic source or indirect production of chemical signals can be found in \cite{W.B.L.2020NA,L-W2020JMAA,L-W2020ZAMP,L-W2020P}.

{\bf Case of $\mu=0$}. Turning to the case $\mu=0$, it was first shown in \cite{T-W2017MMMAS} that globally bounded solutions exist in two dimensions if the motility function $\gamma(v)$ has both positive lower and upper bounds. If $\gamma(v)=\frac{c_0}{v^k}$, it was proved in \cite{Y-K2017AAM} that global bounded solutions exist in all dimensions provided that $c_0>0$ is small. The smallness of $c_0$ is later removed in \cite{A-Y2019N} for the parabolic-elliptic case model (i.e. the second equation of \eqref{OR-2} is replaced by $D\Delta v+u-v=0$) with $0<k<\frac{2}{n-2}$. The global existence of weak solutions of \eqref{OR-2} with large initial data was established in \cite{D-K-T-Y2019NARWA} for $\gamma(v)=\frac{1}{c+v^k}$ with $c\geq 0, k>0$ and $1\leq n\leq 3$. When $\gamma(v)=e^{-\chi v}$, a critical mass phenomenon was identified in \cite{J-W2020PAMS}:  if $n=2$, there is a critical number $m=4\pi/\chi>0$ such that the solution of \eqref{OR-2} with $D=1$ may blow up if the initial cell mass $\|u_0\|_{L^1(\Omega)}>m$, while global bounded solutions exist if $\|u_0\|_{L^1(\Omega)}<m$. This result was further refined in \cite{F-J2020preprint,F-J2020JDE} showing that the blowup occurs at the infinity time. Recently global weak solutions of \eqref{OR-2} in all dimensions and the blow-up of solutions of \eqref{OR-2} in two dimensions were investigated in \cite{Burger}.

Except the studies on the bounded domain with zero Neumman boundary conditions, there are some results obtained in the whole space: when $\gamma(v)$ is a piecewise constant function, the dynamics of discontinuity interface was studied in \cite{SIK-EJAP-2019} and  discontinuous traveling wave solutions of \eqref{OR-2} with $\mu>0$ were constructed in \cite{Lui}; the existence of smooth traveling wave solutions of \eqref{OR-2} with $\mu>0$ and a particular motility function $\gamma(v)=1/(1+v)^m (m>0)$ was recently shown in \cite{Li-Wang-2020}.

Compared to the abundant results recently obtained for the system \eqref{OR-2} as recalled above, the progress made to the three-component system \eqref{dsm} is very limited. The purpose of this paper is explore the global well-posedness of the following system
\begin{equation}\label{1}
\begin{cases}
u_{t}=\Delta(\gamma(v) u)+\alpha u F(w), & x \in \Omega, \quad t>0, \\
v_{t}=D \Delta v+u-v, & x \in \Omega, \quad t>0, \\
w_{t}=\Delta w-u F(w), & x \in \Omega, \quad t>0, \\
\frac{\partial u}{\partial \nu}=\frac{\partial v}{\partial \nu}=\frac{\partial w}{\partial \nu}=0, & x \in \partial \Omega, \quad t>0, \\
(u, v, w)(x, 0)=\left(u_{0}, v_{0}, w_{0}\right)(x), & x \in \Omega,
\end{cases}
\end{equation}
with constants $\alpha>0$ and $D>0$, where the system \eqref{dsm} is a special case of \eqref{1} with $F(w)=\frac{w^2}{w^2+\lambda}$. By postulating that
\begin{equation}\label{assumption1}
\gamma(v)\in C^3([0,+\infty))\, \ \text{and}\,\ 0<\gamma_1\leq\gamma(v)\leq\gamma_2,\,|\gamma'(v)|<\eta\, \ \text{on}\,\ [0,+\infty)
\end{equation}
where $\gamma_1, \gamma_2$ are positive constants, and
$$F\in C^1([0,+\infty)),\,F(0)=0\,\ \text{and}\,\ F(w)>0,\,F'(w)>0\,\ \text{on}\,\ (0,+\infty),$$
a recent work \cite{J-S-W2020JDE} showed that the problem \eqref{1} admits a global classical solution $(u, v, w)$ which asymptotically converges to $(u_* , u _*, 0)$ in $L^\infty$ with $u_*=\frac{1}{|\Omega|}\left(\|u_0\|_{L^1}+\alpha\|w_0\|_{L^1}\right)$ if $D>0$ is suitably large. The main approaches employed in \cite{J-S-W2020JDE} to establish the global classical solutions with uniform-in-time bounds are based on the method of energy estimates and Moser iteration by fully capturing the diffusive dissipation of $u$ with the assumption that $\gamma(v)$ has a positive lower bound. The assumption \eqref{assumption1} bypasses the possible diffusion degeneracy/singularity and rules out a large class of functions such as $\gamma(v)=\frac{c_0}{v^k} (c_0, k>0)$ and $\gamma(v)=e^{-\chi v} (\chi>0)$  widely studied in the existing works as recalled above. The goal of this paper is to remove this essential restriction imposed in \eqref{assumption1} and establish the global well-posedness of solutions to \eqref{1}. Roughly speaking,  under the following structural assumptions on $\gamma(v)$ and $F(v)$:
\begin{equation}\label{4}
\gamma(v)\in C^3([0,+\infty)),\ \gamma(v)>0,\ \gamma'(v)<0\ \ \text{on}\ \ (0,+\infty), \ \text{and}\,\ \lim_{v\rightarrow+\infty}\gamma(v)=0,
\end{equation}
and
\begin{equation}\label{5}
F\in C^1([0,+\infty)),\,\ F(0)=0\,\ \text{and}\,\ F>0\ \ \text{on}\ \ (0,+\infty),
\end{equation}
then for any initial data $(u_0, v_0, w_0)$ satisfying
\begin{equation}\label{12}
\begin{gathered}
u_0\in C^0(\overline{\Omega}),\,v_0\in W^{1,\infty}(\Omega),\,w_0\in W^{1,\infty}(\Omega),\\
u_0\geq0,\,v_0\geq c_0>0,\,w_0\geq0\,\text{and}\,\,u_0\not\equiv0,
\end{gathered}
\end{equation}
with some constant $c_0>0$, we show the problem \eqref{1} admits a unique global classical solution in two dimensions. Moreover
if there exist $k>0$ and $\bar{v}>0$ such that
\begin{equation}\label{23}
\inf_{v>\bar{v}}v^k\gamma(v)>0
\end{equation}
the solution is uniformly bounded in time.

Our main results are precisely stated as follows.

\begin{theorem}\label{th1}
Let $\Omega\subset \mathbb{R}^n(n\leq 2)$ be a bounded domain with smooth boundary. Assume that the conditions \eqref{4} and \eqref{5} hold. Then for any initial data $(u_0,v_0,w_0)$ satisfying the condition \eqref{12}, there exists a triple
$(u,v,w)$ of non-negative functions
$$(u,v,w)\in \left[C^0(\overline{\Omega}\times[0,+\infty))\cap C^{2,1}(\overline{\Omega}\times(0,+\infty))\right]^3$$
which solves \eqref{1} in the classical sense. Moreover, if the motility function $\gamma$ satisfies the condition \eqref{23}, then the global solution is uniformly bounded in time, that is  there exists a constant $C>0$ such that
\begin{equation*}
\|u(\cdot,t)\|_{L^\infty}+\|v(\cdot,t)\|_{W^{1,\infty}}+\|w(\cdot,t)\|_{W^{1,\infty}}\leq C \ \ \text{for all}\ \ t>0.
\end{equation*}
\end{theorem}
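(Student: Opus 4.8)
The plan is to establish the global classical solution by first recording a local existence result via standard fixed-point arguments (Amann's theory / contraction mapping in suitable Hölder spaces), obtaining a maximal existence time $T_{\max}\in(0,+\infty]$ together with the extensibility criterion: if $T_{\max}<\infty$ then $\limsup_{t\uparrow T_{\max}}\bigl(\|u(\cdot,t)\|_{L^\infty}+\|v(\cdot,t)\|_{W^{1,\infty}}+\|w(\cdot,t)\|_{W^{1,\infty}}\bigr)=\infty$. Non-negativity of $u,v,w$ follows from the maximum principle (using $F(0)=0$ for the $w$-equation and $F\ge 0$ for the $u$-equation), and the lower bound $v\ge c_0 e^{-t}$ comes from comparison in the second equation; crucially $w$ is bounded above by $\|w_0\|_{L^\infty}$ because its reaction term $-uF(w)\le 0$, and then $\alpha uF(w)\le \alpha\|F\|_{L^\infty([0,\|w_0\|_\infty])}\,u$ so the $u$-equation has at most linear growth. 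Integrating the first and third equations gives $\frac{d}{dt}\int_\Omega u=\alpha\int_\Omega uF(w)$ and $\frac{d}{dt}\int_\Omega w=-\int_\Omega uF(w)$, hence $\frac{d}{dt}\int_\Omega(u+\alpha w)=0$, so $\|u(\cdot,t)\|_{L^1}$ is bounded on any finite interval (indeed $\int_\Omega u\le \int_\Omega u_0+\alpha\int_\Omega w_0$ uniformly in time). Combined with the $v$-equation this yields a uniform-in-time $L^1$ bound on $v$ as well.

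The heart of the matter is upgrading the $L^1$ bound on $u$ to an $L^\infty$ bound. I would exploit the key cancellation specific to this system: testing the $u$-equation (written as $u_t=\Delta(\gamma(v)u)+\alpha uF(w)$) against $\ln u$ or against $\gamma(v)u$, and simultaneously using the $v$-equation, to produce a functional of the form $\int_\Omega u\ln u + \tfrac{D}{2}\int_\Omega|\nabla v|^2$ (or a weighted variant) whose time derivative can be controlled. The quadratic term $2\gamma'(v)\nabla v\cdot\nabla u$ in the expansion of $\Delta(\gamma(v)u)$ is dangerous because $\gamma$ may degenerate, but in two dimensions one can absorb it using the Gagliardo–Nirenberg inequality $\|u\|_{L^2}^2\le C\|\nabla\sqrt{u}\|_{L^2}\|\sqrt{u}\|_{L^2}\|u\|_{L^1}^{1/2}+\cdots$ together with the $L^1$ bound on $u$, since the critical exponent is exactly borderline at $n=2$. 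This is the standard mechanism by which the $\mu=0$ two-dimensional theory for \eqref{OR-2} (e.g. \cite{T-W2017MMMAS,J-W2020PAMS}) produces subcritical-mass boundedness; here the logistic-type source is absent, but the conserved quantity $\int_\Omega(u+\alpha w)$ plays the role of a mass bound and the extra reaction $\alpha uF(w)$ is handled by the $L^\infty$ bound on $w$. Once $\int_\Omega u\ln u$ is controlled on finite intervals, elliptic/parabolic regularity for the $v$-equation gives $\nabla v\in L^q$ for all $q<\infty$, and then a Moser-type iteration (or semigroup estimates using the heat semigroup $e^{t\Delta}$ and the variation-of-constants representation $u(t)=e^{t(\Delta-\,\cdot\,)}$-type formulas adapted to the nonlinear operator) bootstraps to $\|u(\cdot,t)\|_{L^\infty}<\infty$ on $[0,T_{\max})$, ruling out finite-time blowup.

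For the uniform-in-time boundedness under the extra hypothesis \eqref{23}, i.e. $v^k\gamma(v)\ge\delta>0$ for $v>\bar v$, the point is that the diffusion rate $\gamma(v)$ cannot decay faster than $v^{-k}$, while the $v$-equation forces $v$ to stay comparable to $u$ in an averaged sense; since $\int_\Omega u$ is bounded uniformly in time, $v$ cannot grow too fast, so $\gamma(v)$ stays bounded below by a controllable negative power of a controllable quantity. Concretely I would derive a uniform-in-time $L^p$ bound on $u$ for large $p$ by a differential inequality of the form $\frac{d}{dt}\int_\Omega u^p + c\int_\Omega \gamma(v)u^{p-2}|\nabla u|^2 \le (\text{lower order})$, using \eqref{23} to convert $\gamma(v)u^{p-2}|\nabla u|^2$ into $\gtrsim v^{-k}u^{p-2}|\nabla u|^2$ and then coupling with $L^q$ bounds on $v$ obtained from the heat semigroup acting on the uniformly-$L^1$-bounded source $u$; an Ehrling/interpolation argument closes the loop and gives $\sup_{t>0}\|u\|_{L^p}<\infty$, after which a time-uniform Moser iteration yields the $L^\infty$ bound on $u$ and parabolic smoothing yields the $W^{1,\infty}$ bounds on $v$ and $w$.

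The main obstacle I anticipate is precisely the control of the cross-diffusion term $\gamma'(v)\nabla v\cdot\nabla u$ (equivalently the term $\gamma''(v)|\nabla v|^2 u$) in the regime where $\gamma$ degenerates: without a lower bound on $\gamma$ one does not get a clean dissipation $\int|\nabla u|^2$, and the two-dimensional borderline Gagliardo–Nirenberg estimates must be used with some care to ensure the constants do not blow up — this is where the restriction $n\le 2$ is genuinely needed and where the bulk of the technical work will lie.
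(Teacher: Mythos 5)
There is a genuine gap, and it sits exactly where you flag ``the main obstacle.'' You never produce a pointwise upper bound on $v$, and without one the diffusion rate $\gamma(v)$ may degenerate to zero, so the dissipation you want to harvest from testing the $u$-equation is simply not available. The entropy route you outline (test against $\ln u$ and use borderline two-dimensional Gagliardo--Nirenberg together with the $L^1$ bound on $u$) is precisely the mechanism behind the \emph{subcritical-mass} results for \eqref{OR-2}, but Theorem~\ref{th1} holds for \emph{all} initial data with no smallness requirement on $\int_\Omega(u_0+\alpha w_0)$; indeed for \eqref{OR-2} with $\mu=0$ and $\gamma(v)=e^{-\chi v}$ a true critical-mass threshold exists (see \cite{J-W2020PAMS}), so a straight entropy argument cannot be expected to close without extra input. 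A second issue: your lower bound $v\ge c_0e^{-t}$ decays, while the paper uses Fujie's lemma \cite{K.F.2015JMAA} (valid because $\int_\Omega u\ge\int_\Omega u_0>0$) to obtain a \emph{time-independent} positive lower bound on $v$, which is needed to preclude a singularity of $\gamma$ near $v=0$ in the uniform-in-time regime.

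The missing idea is the auxiliary elliptic comparison function of Fujie--Jiang \cite{F-J2020JDE,F-J2020preprint}, which the paper uses before any energy estimate on $u$. Set $g=(I-D\Delta)^{-1}u$, rewrite the first equation as $u_t=-\tfrac1D(I-D\Delta)[\gamma(v)u]+\alpha uF(w)+\tfrac1D\gamma(v)u$, apply $(I-D\Delta)^{-1}$ to obtain the pointwise identity $g_t+\tfrac1D\gamma(v)u=\tfrac1D(I-D\Delta)^{-1}[\gamma(v)u]+\alpha(I-D\Delta)^{-1}[uF(w)]$, and use the elliptic comparison principle (since $\gamma(v)$ is bounded above by the lower bound of $v$ and $F(w)$ is bounded) to get $g_t\le Cg$. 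Introducing $\Gamma(s)=\tfrac1D\int_{C_1}^s\gamma$ with $\gamma(C_1)<D$ and comparing $v$ with $g+\Gamma(v)+C$ in the parabolic equation for $v$ yields $v\le C(g+1)$, hence $v$ bounded on $[0,T_{\max})$ whenever $T_{\max}<\infty$, which is what rules out finite-time blowup. Under \eqref{23} one tests the rewritten equation against $g$, controls $\int\gamma^{-p/(2-p)}(v)$ by a power of $g$ via $v\le C(g+1)$ and the elliptic $L^1\!\to L^q$ estimate, and closes with a uniform Gr\"onwall inequality to get a time-independent bound on $v$. Only \emph{after} $v$ is pinched between two positive constants is $\gamma(v)$ non-degenerate and non-singular; at that point the duality-based space-time $L^2$ estimate, the $L^p$ estimates, and Moser iteration that you sketch go through routinely. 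Your outline has the bootstrap right but omits the one genuinely new step, and as written the argument would not close.
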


The key of proving Theorem \ref{th1} is to derive that $v$ has a positive lower bound to rule out the diffusion singularity and has an upper bound to exclude the diffusion degeneracy (see section 3.3.) The positive lower bound of $v$ can be obtained easily by showing that $\int_\Omega udx$ has a positive lower bound along with a nice result of \cite{K.F.2015JMAA}. The crucial step is to show that $v$ has an upper bound.  Inspired by an idea from the work \cite{F-J2020JDE}, we construct an auxiliary problem and use the maximum principle for the inverse operator $(I-D\Delta)^{-1}$ to derive an upper bound of $v$ through the auxiliary problem.

The rest of this paper is organized as follows. Section \ref{sec2} is devoted to the local existence of solutions and extensibility of \eqref{1}. With some important inequalities which will be used frequently, we derive a priori estimates of solutions for the system \eqref{1} in section \ref{sec3}. Finally, we prove Theorem \ref{th1} in section \ref{sec4}.

\section{Preliminaries}\label{sec2}
In this section, we present some basic results and facts, including local existence and extensibility criterion of classical solutions as well as some frequently used well-known inequalities.

The existence of local solutions and extensibility criterion for the system \eqref{1} can be obtained by Amann's theorem (cf. \cite{H.A.1993}) or fixed point theorem (cf. \cite{J-K-W2018}). Below, we only state the local existence result without proof.

\begin{lemma}[Local existence]\label{lm3}
Let $\Omega\subset \mathbb{R}^n$ be a bounded domain with smooth boundary. If the initial data satisfy the condition \eqref{12}, then there exist a constant $T_{max}\in(0,\infty]$ and a triple $(u,v,w)$ of non-negative functions
$$(u,v,w)\in \left[C^0(\overline{\Omega}\times[0,T_{max}))\cap C^{2,1}(\overline{\Omega}\times(0,T_{max}))\right]^3,$$
which solves \eqref{1} in the classical sense in $\Omega\times(0,T_{max})$. Moreover, if $T_{max}<+\infty$, then
\begin{equation*}
\limsup\limits_{t\nearrow T_{max}}(\|u(\cdot,t)\|_{L^\infty}+\|v(\cdot,t)\|_{W^{1,\infty}}+\|w(\cdot,t)\|_{W^{1,\infty}})=\infty.
\end{equation*}
\end{lemma}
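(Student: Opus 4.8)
The plan is to treat \eqref{1} as a triangular quasilinear parabolic system and to invoke the abstract theory of Amann \cite{H.A.1993}; an essentially equivalent contraction‑mapping argument in the spirit of \cite{J-K-W2018} is indicated at the end. Writing $y=(u,v,w)^{\mathrm T}$ and using $\Delta(\gamma(v)u)=\nabla\cdot(\gamma(v)\nabla u+u\gamma'(v)\nabla v)$, the system takes the divergence form $y_t=\nabla\cdot(\mathcal A(y)\nabla y)+\mathcal F(y)$ under homogeneous Neumann conditions, where
\[
\mathcal A(y)=\begin{pmatrix}\gamma(v)&u\gamma'(v)&0\\ 0&D&0\\ 0&0&1\end{pmatrix},\qquad \mathcal F(y)=\bigl(\alpha uF(w),\,u-v,\,-uF(w)\bigr)^{\mathrm T}.
\]
The matrix $\mathcal A(y)$ is upper triangular, so its spectrum is $\{\gamma(v),D,1\}$ regardless of the size of the off‑diagonal entry $u\gamma'(v)$. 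For initial data as in \eqref{12} one has $\min_{\overline\Omega}v_0\ge c_0>0$, and since $\gamma>0$ on $(0,\infty)$ by \eqref{4}, $\mathcal A(y)$ is normally elliptic on a neighbourhood of the range of $y_0$. As $\gamma\in C^3$ and $F\in C^1$, the coefficient map $y\mapsto(\mathcal A(y),\mathcal F(y))$ has the regularity required by Amann's theorem, which therefore yields a unique maximal classical solution on an interval $[0,T_{max})$ with the asserted regularity; maximality means precisely that the solution cannot be continued as a classical solution past $T_{max}$.

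Next I would record the sign properties. The reaction field $\mathcal F$ is quasi‑positive — each component is nonnegative whenever the corresponding unknown vanishes and the others are nonnegative, which uses $F(0)=0$ — and since $\mathcal A$ is triangular with $\gamma(v)>0$ maintained by continuity on a short time interval, the standard invariant‑region argument shows $u,v,w\ge0$ throughout $[0,T_{max})$. Moreover $0$ is a subsolution of the $w$‑equation (again because $F(0)=0$), and once $u\ge0$ is known, comparing the second equation with the ODE $\underline v'=-\underline v$, $\underline v(0)=c_0$, gives the quantitative bound
\[
v(x,t)\ge c_0 e^{-t}>0\qquad\text{on }\ \overline\Omega\times[0,T_{max}),
\]
so $v$ stays uniformly positive on every finite time interval and $\gamma(v)$ is correspondingly bounded away from $0$ there. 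Finally, since $u_0\not\equiv0$, the strong maximum principle applied to the linear equation satisfied by $u$ yields $u>0$ in $\overline\Omega\times(0,T_{max})$.

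To obtain the extensibility criterion I would argue by contradiction. Suppose $T_{max}<\infty$ and
\[
M:=\limsup_{t\nearrow T_{max}}\bigl(\|u(\cdot,t)\|_{L^\infty}+\|v(\cdot,t)\|_{W^{1,\infty}}+\|w(\cdot,t)\|_{W^{1,\infty}}\bigr)<\infty.
\]
Then on $[T_0,T_{max})$ with $T_0$ close to $T_{max}$ these three quantities are bounded and, by the previous step, $v\ge c_0 e^{-T_{max}}=:\delta>0$; hence $\gamma(v)$, $1/\gamma(v)$, $|\gamma'(v)|$, $|\gamma''(v)|$, $F(w)$, $F'(w)$ are all bounded on that interval. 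A bootstrap then follows: parabolic $L^p$ estimates for the linear equations for $v$ (source $u-v\in L^\infty$) and for $w$ (source $-uF(w)\in L^\infty$) give uniform $W^{2,1}_p$ bounds for all $p<\infty$, hence — since $n\le2$ — uniform $C^{1+\beta,(1+\beta)/2}$ bounds; in particular $\gamma(v)$ is uniformly Hölder and bounded below and $\gamma'(v)\nabla v$ is uniformly Hölder. Viewing $u_t=\nabla\cdot(\gamma(v)\nabla u+u\gamma'(v)\nabla v)+\alpha F(w)u$ as a uniformly parabolic divergence‑form equation with such coefficients, De Giorgi--Nash--Moser estimates followed by Schauder estimates yield uniform $C^{2+\beta,1+\beta/2}$ bounds for $u$ up to $T_{max}$, and feeding this back improves $v,w$ likewise. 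Consequently $(u,v,w)(\cdot,t)$ converges in $C^{2}(\overline\Omega)$ as $t\nearrow T_{max}$ to a limit satisfying the hypotheses of the local theory (with a strictly positive $v$‑component and, by the strong maximum principle, $u(\cdot,T_{max})>0$), so Amann's theorem can be reapplied from $t=T_{max}$ to continue the solution beyond $T_{max}$ — contradicting maximality. Hence $M=\infty$, which is exactly the asserted criterion.

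The main difficulty is the quasilinear, potentially degenerate nature of the first equation: one must keep the diffusion coefficient $\gamma(v)$ uniformly positive on the time interval under consideration — which is precisely what the lower bound $v\ge c_0 e^{-t}$ supplies — and one must carry out the regularity bootstrap for the divergence‑form $u$‑equation whose leading coefficient is $\gamma(v)$ and whose first‑order coefficients involve $\nabla v$; the low initial regularity $u_0\in C^0(\overline\Omega)$ (rather than $W^{1,\infty}$) also forces one to work in a base space for which the local theory still applies. A self‑contained alternative replaces \cite{H.A.1993} by a Banach fixed‑point argument: for small $T>0$, in a ball of $C^0(\overline\Omega\times[0,T])$ (or a small parabolic Hölder ball) around $u_0$, map $\tilde u\mapsto u$ by first solving the scalar problems for $w$ (semilinear, coefficient $\tilde u$) and for $v$ (linear, source $\tilde u$), then solving the linear $u$‑equation with coefficients built from $v,w$; smoothing estimates for the Neumann heat semigroup show this map is a contraction for $T$ small, as in \cite{J-K-W2018}, and the blow‑up criterion then follows from the uniformity of the existence time in terms of the norms entering $M$.
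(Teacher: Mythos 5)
Your proposal follows the same route the paper itself indicates: the paper states Lemma~\ref{lm3} without proof, deferring to Amann's quasilinear theory \cite{H.A.1993} or a contraction-mapping argument as in \cite{J-K-W2018}, and you expand exactly this, correctly identifying the triangular diffusion matrix with spectrum $\{\gamma(v),D,1\}$ (hence normal ellipticity once $v\ge c_0e^{-t}>0$ is secured), quasi-positivity of the reaction term for nonnegativity, and the parabolic/Schauder bootstrap that upgrades boundedness of $\|u\|_{L^\infty}+\|v\|_{W^{1,\infty}}+\|w\|_{W^{1,\infty}}$ to continuability, yielding the stated extensibility criterion. The argument is sound and consistent with the references the paper cites.
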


Next, we recall some well-known results which will be used later frequently. The first one is an ODE inequality \cite{S-S-W2014}.

\begin{lemma}\label{lm16}
Let $T_{max}>0$, $\tau\in(0,T_{max})$, $a>0$ and $b>0$. Suppose that $y:[0,T_{max})\rightarrow[0,\infty)$ is absolutely
continuous and satisfies
$$y'(t)+ay(t)\leq h(t)\ \ \text{for all}\ \ t\in(0,T_{max})$$
with some nonnegative function $h\in L^1_{loc}([0,T_{max}))$ fulfilling
$$\int_t^{t+\tau}h(s)ds\leq b\ \ \text{for all}\ \ t\in[0,T_{max}-\tau).$$
Then it follows that
$$y(t)\leq\max\left\{y(0)+b,\frac{b}{a\tau}+2b\right\}\ \ \text{for all}\ \ t\in[0,T_{max}).$$
\end{lemma}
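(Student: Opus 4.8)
The plan is to combine the elementary integrating-factor estimate for the differential inequality with a finite backward induction in steps of length $\tau$. The driving observation is a dichotomy on time windows of length $\tau$: on any such window either $y$ comes down to the threshold $\tfrac{b}{a\tau}$ somewhere, in which case the window's accumulated forcing (at most $b$) can only lift it to $\tfrac{b}{a\tau}+b$; or $y$ stays strictly above $\tfrac{b}{a\tau}$ throughout the window, in which case the dissipation $ay$ beats the forcing and $y$ is no larger at the right endpoint than at the left.

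First I would record the consequence of the hypothesis obtained by multiplying $y'+ay\le h$ by the integrating factor $e^{at}$ and integrating (legitimate since $y$ is absolutely continuous): for all $0\le t_0\le t<T_{max}$,
$$
y(t)\le e^{-a(t-t_0)}y(t_0)+\int_{t_0}^{t}e^{-a(t-s)}h(s)\,ds\le y(t_0)+\int_{t_0}^{t}h(s)\,ds ,
$$
where the last inequality uses $e^{-a(\cdot)}\le1$ and $h\ge0$. From this I extract two facts: \emph{(i)} if $0\le t\le\tau$ then $y(t)\le y(0)+\int_0^\tau h\le y(0)+b$ (the hypothesis may be used with $t=0$ since $\tau<T_{max}$); \emph{(ii)} if $t_0<T_{max}-\tau$ and $y(s)\le\tfrac{b}{a\tau}$ for some $s\in[t_0,t_0+\tau]$, then $y(t_0+\tau)\le y(s)+\int_{t_0}^{t_0+\tau}h\le\tfrac{b}{a\tau}+b$.

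Then I would fix $t^*\in(0,T_{max})$. If $t^*\le\tau$, fact (i) finishes it. If $t^*>\tau$, set $t_j:=t^*-j\tau$ and let $m\ge1$ be the integer with $t_m\in(0,\tau]$, so that $t_0=t^*$, all $t_j\in(0,T_{max})$, and $t_j>\tau$ for $0\le j\le m-1$. For each window $[t_{j+1},t_j]$ with $0\le j\le m-1$ there are two cases. If $y(s)\le\tfrac{b}{a\tau}$ for some $s$ in the window, then $y(t_j)\le\tfrac{b}{a\tau}+b$ by (ii). Otherwise $y>\tfrac{b}{a\tau}$ on all of $[t_{j+1},t_j]$, and integrating $y'+ay\le h$ over this window of length $\tau$ gives
$$
y(t_j)-y(t_{j+1})\le\int_{t_{j+1}}^{t_j}h(s)\,ds-a\int_{t_{j+1}}^{t_j}y(s)\,ds\le b-a\tau\cdot\tfrac{b}{a\tau}=0 ,
$$
i.e. $y(t_j)\le y(t_{j+1})$. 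Now scan $j=0,1,2,\dots$: if the first case ever occurs, take the smallest such index $j_0$; for $j<j_0$ the second case holds, so $y(t^*)=y(t_0)\le y(t_1)\le\cdots\le y(t_{j_0})\le\tfrac{b}{a\tau}+b$. If the second case holds for every $j=0,\dots,m-1$, then $y(t^*)\le y(t_1)\le\cdots\le y(t_m)\le y(0)+b$, the last step by (i) because $0<t_m\le\tau$. In all cases $y(t^*)\le\max\bigl\{\,y(0)+b,\ \tfrac{b}{a\tau}+2b\,\bigr\}$, as claimed.

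The argument is entirely elementary, so I expect no genuine obstacle; the only delicate point is the bookkeeping of endpoints — checking that each window $[t_{j+1},t_j]$ lies in $(0,T_{max})$ with left endpoint in $[0,T_{max}-\tau)$, so that both the differential inequality and the integral bound $\int_t^{t+\tau}h\le b$ are available there, and that the monotone chain $y(t_0)\le y(t_1)\le\cdots$ can only be interrupted by hitting a ``low'' window or by reaching the initial window $[0,t_m]$.
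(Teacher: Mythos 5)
Your proof is correct. The paper itself does not prove Lemma~\ref{lm16}; it cites it from Stinner--Surulescu--Winkler~\cite{S-S-W2014}, and your argument is essentially the standard one found there and in similar ODE lemmas in Winkler's work: multiply by the integrating factor $e^{at}$ to get $y(t)\le y(t_0)+\int_{t_0}^{t}h$, then march backward through windows of length $\tau$, splitting each window according to whether $y$ ever dips to the threshold $\tfrac{b}{a\tau}$ (in which case the accumulated forcing lifts it by at most $b$) or stays above it (in which case $a\int y>b\ge\int h$ and the right endpoint value does not exceed the left). The endpoint bookkeeping checks out: $t_{j+1}+\tau=t_j\le t^*<T_{max}$ guarantees $t_{j+1}\in[0,T_{max}-\tau)$, so the hypothesis $\int_{t_{j+1}}^{t_{j+1}+\tau}h\le b$ is available on every window used, and the base case $t_m\in(0,\tau]$ is handled by fact~(i). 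Note your argument actually yields the marginally sharper constant $\max\{y(0)+b,\,\tfrac{b}{a\tau}+b\}$, which of course implies the stated $\max\{y(0)+b,\,\tfrac{b}{a\tau}+2b\}$.
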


Below is an uniform Gr\"{o}nwall inequality \cite{R.T.1988} which can help us to derive the uniform-in-time estimates of solutions.

\begin{lemma}\label{lm9}
Let $T_{max}>0$, $\tau\in(0,T_{max})$. Suppose that $a,b,y$ are three positive locally integrable functions on $(0,T_{max})$ such that $y'$ is locally integrable on $(0, T_{max})$ and the following inequalities are satisfied:
$$y'(t) \leq a(t)y(t)+b(t)\ \ \text{for all}\ \ t\in(0,T_{max})$$
as well as
$$\int_{t}^{t+\tau} a\leq a_{1}, \ \ \int_{t}^{t+\tau} b \leq a_{2}, \ \  \int_{t}^{t+\tau} y \leq a_{3}\ \ \text{for all}\ \ t\in[0, T_{max}-\tau),$$
where $a_{i}(i=1,2,3)$ are positive constants. Then
$$y(t) \leq\left(\frac{a_{3}}{\tau}+a_{2}\right) e^{a_{1}}\ \ \text{for all}\ \ t\in[\tau, T_{max}).$$

\end{lemma}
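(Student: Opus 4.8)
The plan is to run the classical ``averaging over the base point'' device that underlies the uniform Gr\"{o}nwall inequality. Fix an arbitrary $t\in[\tau,T_{max})$ and let $s$ range over the interval $[t-\tau,t]$; note $t-\tau\geq0$, so this interval is contained in $[0,T_{max})$. Integrating the differential inequality $y'\leq ay+b$ from $s$ to $t$ and applying the ordinary (integral form of the) Gr\"{o}nwall lemma on $[s,t]$ yields
$$
y(t)\leq y(s)\exp\!\Big(\int_s^t a(\sigma)\,d\sigma\Big)+\int_s^t b(\sigma)\exp\!\Big(\int_\sigma^t a(\rho)\,d\rho\Big)d\sigma .
$$
Since $a\geq0$ and $0\leq t-s\leq\tau$, we have $\int_s^t a\leq\int_{t-\tau}^t a\leq a_1$ and likewise $\int_\sigma^t a\leq a_1$ for every $\sigma\in[s,t]$; combining this with $\int_{t-\tau}^t b\leq a_2$ gives the bound, uniform in $s$,
$$
y(t)\leq e^{a_1}\,y(s)+a_2 e^{a_1}\qquad\text{for all }s\in[t-\tau,t].
$$

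Next I would integrate this last inequality with respect to $s$ over $[t-\tau,t]$, an interval of length exactly $\tau$. The left-hand side contributes $\tau\,y(t)$, being constant in $s$; the first term on the right contributes $e^{a_1}\int_{t-\tau}^t y(s)\,ds\leq a_3 e^{a_1}$ by the hypothesis $\int_t^{t+\tau}y\leq a_3$ applied with base point $t-\tau$ (legitimate precisely because $t\in[\tau,T_{max})$); and the last term contributes $\tau a_2 e^{a_1}$. Dividing by $\tau$ produces
$$
y(t)\leq\Big(\frac{a_3}{\tau}+a_2\Big)e^{a_1},
$$
which is the claimed estimate; since $t\in[\tau,T_{max})$ was arbitrary, the proof is complete.

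There is no substantial obstacle here: the one point that needs a little care is that the Gr\"{o}nwall estimate must be invoked with a \emph{variable} base point $s$ rather than the fixed endpoint $t-\tau$, so that after averaging in $s$ one may use the integral bound $\int_{t-\tau}^t y\leq a_3$ in place of a pointwise bound on $y$; this is exactly the step that upgrades the classical Gr\"{o}nwall inequality to its uniform, long-time version. As minor technical remarks, the assumed local integrability of $a,b,y$ and $y'$ is what justifies the integrations and the use of Gr\"{o}nwall's lemma on each compact subinterval of $(0,T_{max})$, while the positivity of $a$ and $b$ makes the monotonicity steps such as $\int_s^t a\leq a_1$ immediate.
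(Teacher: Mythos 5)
Your argument is correct and is precisely the classical proof of the uniform Gr\"onwall lemma (integrate the differential inequality from a variable base point $s\in[t-\tau,t]$ via Gr\"onwall, then average over $s$ to convert the integral bound on $y$ into a pointwise bound). The paper gives no proof of its own here, simply citing Temam's book, and the proof given there is essentially the one you reproduce, so there is nothing to compare beyond noting that your write-up supplies the details the paper omits; the one hypothesis you implicitly use, that $y$ is absolutely continuous so that the Gr\"onwall manipulation on each subinterval is legitimate, is exactly what the local integrability of $y'$ together with that of $y$ is meant to encode.
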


The third one is about the regularity of solutions to the linear parabolic equation and the proof can be found in \cite{K-S2008}.

\begin{lemma}\label{lm10}
Assume that $\Omega\subset\mathbb{R}^n$ is a bounded domain with smooth boundary. Suppose that
$y(x,t)\in C^{2,1}(\bar{\Omega}\times(0,T_{max}))$ is the solution of
\begin{equation*}
 \begin{cases}
 y_{t}=\Delta y-y+\phi(x,t),& x\in\Omega,t\in(0,T_{max}),\\
 \frac{\partial y}{\partial \nu}=0,&x\in\partial\Omega,t\in(0,T_{max}),\\
 y(x,t)=y_0(x) \in C^0 (\bar{\Omega}),
 \end{cases}
\end{equation*}
where $\phi(x,t)\in L^{\infty}((0,T_{max});L^p(\Omega))$. Then there exists a constant $C>0$ such that
\begin{equation*}
  \|y(\cdot,t)\|_{W^{1,q}}\leq C \ \ \text{for all}\ \ t\in(0,T_{max})
\end{equation*}
with
\begin{equation*}
 q\in
  \begin{cases}
  [1,\frac{np}{n-p}), &\text{if}~p\leq n,\\
  [1,\infty],&\text{if}~p> n.
  \end{cases}
\end{equation*}

\end{lemma}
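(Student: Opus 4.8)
The plan is to represent the solution through the variation-of-constants formula associated with the Neumann heat semigroup and then to invoke the standard $L^p$-$L^q$ smoothing estimates for that semigroup. Write $(e^{\tau\Delta})_{\tau\ge0}$ for the Neumann heat semigroup on $\Omega$; since $\Delta-1$ generates $e^{\tau(\Delta-1)}=e^{-\tau}e^{\tau\Delta}$, Duhamel's principle gives
\begin{equation*}
y(\cdot,t)=e^{t(\Delta-1)}y_0+\int_0^te^{(t-s)(\Delta-1)}\phi(\cdot,s)\,ds,\qquad t\in(0,T_{max}).
\end{equation*}
Applying $\nabla$ and the triangle inequality, the assertion reduces to bounding $\|\nabla e^{t(\Delta-1)}y_0\|_{L^q}$ and $\int_0^t\|\nabla e^{(t-s)(\Delta-1)}\phi(\cdot,s)\|_{L^q}\,ds$. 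I will rely on the classical facts that, for $1\le p\le q\le\infty$, there is $C>0$ with
\begin{gather*}
\|e^{\tau\Delta}f\|_{L^q(\Omega)}\le C\bigl(1+\tau^{-\frac n2(\frac1p-\frac1q)}\bigr)\|f\|_{L^p(\Omega)},\\
\|\nabla e^{\tau\Delta}f\|_{L^q(\Omega)}\le C\bigl(1+\tau^{-\frac12-\frac n2(\frac1p-\frac1q)}\bigr)\|f\|_{L^p(\Omega)}
\end{gather*}
for all $\tau>0$, the extra factor $e^{-\tau}$ only improving the decay as $\tau\to\infty$; when $q<p$ one first passes through $\|\cdot\|_{L^q(\Omega)}\le|\Omega|^{1/q-1/p}\|\cdot\|_{L^p(\Omega)}$, so it suffices to treat $q\ge p$.

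The decisive step, and the one that selects exactly the admissible exponents, is the source term. Bounding $\|\phi(\cdot,s)\|_{L^p}$ by $M:=\|\phi\|_{L^\infty((0,T_{max});L^p(\Omega))}$ and substituting $\sigma=t-s$,
\begin{equation*}
\int_0^t\bigl\|\nabla e^{(t-s)(\Delta-1)}\phi(\cdot,s)\bigr\|_{L^q(\Omega)}\,ds\le CM\int_0^{\infty}\bigl(1+\sigma^{-\frac12-\frac n2(\frac1p-\frac1q)}\bigr)e^{-\sigma}\,d\sigma .
\end{equation*}
The integral on the right is finite precisely when $\frac12+\frac n2\bigl(\frac1p-\frac1q\bigr)<1$, that is $\frac1q>\frac1p-\frac1n$; rewriting this inequality gives $q<\frac{np}{n-p}$ when $p<n$, every finite $q$ when $p=n$ (in which case $\frac{np}{n-p}=\infty$), and all $q\in[1,\infty]$ when $p>n$ (since then $\frac1p-\frac1n<0$) --- exactly the dichotomy asserted in the lemma. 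For such $q$ the source term is therefore bounded by a constant depending only on $n,p,q,\Omega$ and $M$, uniformly in $t\in(0,T_{max})$.

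For the initial-data contribution, $\|\nabla e^{t(\Delta-1)}y_0\|_{L^q(\Omega)}=e^{-t}\|\nabla e^{t\Delta}y_0\|_{L^q(\Omega)}\le Ce^{-t}\bigl(1+t^{-1/2}\bigr)\|y_0\|_{L^q(\Omega)}$, which, since $y_0\in C^0(\overline{\Omega})\hookrightarrow L^q(\Omega)$, is bounded uniformly on $[\tau,T_{max})$ for every $\tau>0$; the cruder bound $\|\nabla e^{t\Delta}y_0\|_{L^q(\Omega)}\le C\|\nabla y_0\|_{L^q(\Omega)}$ removes even the small-time singularity whenever $y_0\in W^{1,q}(\Omega)$. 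Combining this with the source-term bound yields the claimed uniform control of $\|\nabla y(\cdot,t)\|_{L^q}$, while $\|y(\cdot,t)\|_{L^q}$ is handled in exactly the same way via the first semigroup inequality, whose kernel $e^{-\sigma}\bigl(1+\sigma^{-\frac n2(\frac1p-\frac1q)}\bigr)$ is integrable under the strictly weaker condition $\frac n2\bigl(\frac1p-\frac1q\bigr)<1$; altogether $\|y(\cdot,t)\|_{W^{1,q}(\Omega)}\le C$. I expect the only genuinely delicate point to be the bookkeeping of the smoothing exponents, making sure the integrability threshold of the Duhamel kernel lines up exactly with the stated ranges of $q$; everything else is routine semigroup calculus.
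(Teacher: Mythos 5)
Your variation-of-constants argument with the Neumann heat-semigroup smoothing estimates is the standard proof of this lemma, and it is essentially the same route used in the cited reference \cite{K-S2008} (the paper itself gives no proof, only the citation). Your exponent bookkeeping is correct: the Duhamel integrand $\sigma^{-\frac12-\frac n2(\frac1p-\frac1q)}e^{-\sigma}$ is integrable near $\sigma=0$ precisely when $\frac1q>\frac1p-\frac1n$, which reproduces the stated dichotomy in $q$, and the $L^q$-part of the norm needs the strictly weaker $\frac n2(\frac1p-\frac1q)<1$, which is implied. One remark worth keeping: as you correctly observe, the free-evolution term $\nabla e^{t(\Delta-1)}y_0$ is only bounded uniformly down to $t=0$ if $y_0\in W^{1,q}(\Omega)$ (or if one restricts to $t\ge\tau$); with $y_0$ merely in $C^0(\bar\Omega)$ the bound $C(1+t^{-\frac12+\frac n{2q}})\|y_0\|_{L^\infty}$ blows up as $t\to0$ once $q>n$. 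This is a genuine (if harmless) looseness in the lemma as stated rather than in your proof — in the paper's actual application (Lemma 3.12, where $\phi=u$ and the initial datum is $v_0\in W^{1,\infty}$) the extra regularity is available, so no issue arises.
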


\section{A priori estimates}\label{sec3}
This section is devoted to deriving a priori estimates of solutions for the system \eqref{1}, so that the global existence of solutions can be obtained by the extensibility criterion. We will proceed in several steps. In the following, we shall use $C_i (i=1,2, \cdots)$ to denote a generic positive constant which may vary in the context.

\subsection{The boundedness of \texorpdfstring{$u$}{u} in \texorpdfstring{$L^1(\Omega)$}{L1}}

A basic property of solutions is the uniform-in-time $L^1$ boundedness of $u$ due to the special structure  of the system \eqref{1}.

\begin{lemma}\label{lm12}
Let $(u,v,w)$ be a solution of \eqref{1}. Then there exist constants $c,C>0$ such that
\begin{equation}\label{13}
c\leq\int_\Omega u\leq C\ \ \text{for all}\ \ t\in(0,T_{max}).
\end{equation}
\end{lemma}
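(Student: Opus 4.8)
The plan is to exploit the mass-type conservation structure built into the reaction terms $\pm u F(w)$. First I would integrate the first equation of \eqref{1} over $\Omega$; the homogeneous Neumann condition makes $\int_\Omega \Delta(\gamma(v)u)=0$, so
$$\frac{d}{dt}\int_\Omega u=\alpha\int_\Omega uF(w).$$
Since $u\geq 0$ and $w\geq 0$ by Lemma \ref{lm3}, and $F\geq 0$ on $[0,+\infty)$ by \eqref{5}, the right-hand side is non-negative, hence $t\mapsto\int_\Omega u$ is non-decreasing on $(0,T_{max})$. Because $u_0\in C^0(\overline\Omega)$ with $u_0\geq 0$ and $u_0\not\equiv 0$, we have $\int_\Omega u_0>0$, and therefore $\int_\Omega u(\cdot,t)\geq\int_\Omega u_0=:c>0$ for all $t\in(0,T_{max})$, which is the claimed lower bound.

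For the upper bound I would integrate the third equation of \eqref{1} over $\Omega$, again using the Neumann condition, to obtain
$$\frac{d}{dt}\int_\Omega w=-\int_\Omega uF(w).$$
Adding $\alpha$ times this identity to the previous one cancels the reaction terms exactly:
$$\frac{d}{dt}\Big(\int_\Omega u+\alpha\int_\Omega w\Big)=0,$$
so $\int_\Omega u+\alpha\int_\Omega w\equiv\int_\Omega u_0+\alpha\int_\Omega w_0$ on $(0,T_{max})$. Since $w\geq 0$, dropping the nonnegative term $\alpha\int_\Omega w$ yields $\int_\Omega u(\cdot,t)\leq\int_\Omega u_0+\alpha\int_\Omega w_0=:C$, and the proof is complete.

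There is essentially no hard step here: the argument rests only on the sign conditions $u,w\geq 0$ and $F\geq 0$ (which make the monotonicity giving the lower bound legitimate) and on the precise cancellation of the $uF(w)$ terms between the $u$- and $w$-equations. The one point worth checking is the non-negativity of $u$ and $w$, but this is already part of the conclusion of Lemma \ref{lm3} (alternatively it follows from the maximum principle applied to the linear-in-$u$ form of the first equation, together with the comparison principle for the third equation, using $F(0)=0$). I would also note in passing that the same computation shows $\int_\Omega w$ remains bounded on $(0,T_{max})$, which will be useful later.
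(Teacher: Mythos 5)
Your proof is correct and follows essentially the same route as the paper: integrate the first equation to get monotone non-decreasing mass (giving the lower bound from $\int_\Omega u_0>0$), then add $\alpha$ times the integrated third equation to obtain the conservation law $\int_\Omega u+\alpha\int_\Omega w=\int_\Omega u_0+\alpha\int_\Omega w_0$ and drop the non-negative $w$-term for the upper bound. The only difference is that you spell out the intermediate identities $\frac{d}{dt}\int_\Omega u=\alpha\int_\Omega uF(w)$ and $\frac{d}{dt}\int_\Omega w=-\int_\Omega uF(w)$ explicitly, which the paper leaves implicit.
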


\begin{proof}
Integrating the first equation of \eqref{1} over $\Omega$ with the boundary conditions, we have
$\frac{d}{dt}\int_\Omega u\geq 0$
which implies
$$\int_\Omega u\geq \int_\Omega u_0.$$
We multiply the third equation of \eqref{1} by $\alpha$ and add the resulting equation to the first equation of \eqref{1}. Then integrating the result over $\Omega$ by parts along with the boundary conditions, we get
$$\frac{d}{dt}\left(\int_\Omega u+\alpha\int_\Omega w\right)=0$$
which yields
$$\int_\Omega u+\alpha\int_\Omega w=\int_\Omega u_0+\alpha\int_\Omega w_0.$$
Then, the non-negativity of $u$ and $w$ yields \eqref{13}.
\end{proof}

\subsection{The upper bound of \texorpdfstring{$w$}{w}}

The following lemma concerns the upper bound of $w$ which is an immediate consequence of the maximum principle (see \cite{J-S-W2020JDE}).

\begin{lemma}\label{lm7}
Let $(u,v,w)$ be a solution of \eqref{1}. We can find a constant $C>0$ such that
\begin{equation*}
w\leq C\ \ \text{for any}\ \ (x,t)\in\Omega\times(0,T_{max}).
\end{equation*}
\end{lemma}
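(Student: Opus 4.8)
The plan is to read the bound straight off the third equation of \eqref{1} via a comparison argument. First I would invoke Lemma \ref{lm3} to know that the local solution is nonnegative, so in particular $w \geq 0$ on $\overline{\Omega}\times[0,T_{max})$. Under the structural assumption \eqref{5} the intake function satisfies $F(0)=0$ and $F>0$ on $(0,+\infty)$, hence $F(s)\geq 0$ for every $s\geq 0$; combined with $u\geq 0$ this makes the reaction term in the $w$-equation nonpositive, i.e. $-uF(w)\leq 0$ pointwise on $\Omega\times(0,T_{max})$.

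Therefore $w$ is a subsolution of the homogeneous Neumann heat equation: $w_t-\Delta w = -uF(w)\leq 0$ with $\partial_\nu w = 0$ on $\partial\Omega$ and $w(\cdot,0)=w_0$. The constant function $\overline{w}\equiv\|w_0\|_{L^\infty(\Omega)}$ is a supersolution of the same problem (it solves $\overline{w}_t-\Delta\overline{w}=0$, satisfies the Neumann condition, and dominates $w_0$). Applying the parabolic comparison/maximum principle for Neumann boundary conditions then yields $w(x,t)\leq\|w_0\|_{L^\infty(\Omega)}$ for all $(x,t)\in\Omega\times(0,T_{max})$, so the claim holds with the explicit constant $C=\|w_0\|_{L^\infty(\Omega)}$.

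There is essentially no obstacle in this argument; the only point needing a moment's care is the sign of $F(w)$, which is guaranteed precisely because $w$ remains nonnegative, so the estimate is self-contained once Lemma \ref{lm3} is available. If one prefers to avoid quoting the comparison principle directly, the same conclusion follows by testing the equation against $(w-\|w_0\|_{L^\infty(\Omega)})_+$ and using that $\int_\Omega (w-\|w_0\|_{L^\infty(\Omega)})_+^2$ is nonincreasing in time, but the comparison principle gives the cleanest route.
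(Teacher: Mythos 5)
Your argument is correct and is exactly what the paper has in mind: the paper states the bound as an immediate consequence of the maximum principle (citing \cite{J-S-W2020JDE}) without spelling out the details, and your comparison with the constant supersolution $\|w_0\|_{L^\infty(\Omega)}$, using $w\geq 0$ and $F\geq 0$ to get $-uF(w)\leq 0$, is precisely that argument made explicit.
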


\subsection{The lower and upper bounds of \texorpdfstring{$v$}{v}}

The following lemma is vital for us to rule out the possible singularity of $\gamma(v)$ near $v=0$. The mass inequality \eqref{13} plays a key role in the proof of this lemma. The proof can be found in \cite{K.F.2015JMAA}.

\begin{lemma}\label{lm2}
Let $(u,v,w)$ be a solution of \eqref{1}. Then there exists a constant $C>0$ fulfilling
$$v\geq C\ \ \text{for any}\ \ (x,t)\in\Omega\times(0,T_{max}).$$
\end{lemma}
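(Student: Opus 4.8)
The plan is to obtain a pointwise lower bound for $v$ by combining the $L^1$-lower bound on $u$ from Lemma~\ref{lm12} with a smoothing/comparison argument for the second equation, following \cite{K.F.2015JMAA}. Since $v$ solves $v_t = D\Delta v + u - v$ with homogeneous Neumann boundary conditions, I would represent $v$ via the Neumann heat semigroup: writing $(e^{\tau(D\Delta-1)})_{\tau\ge 0}$ for the semigroup associated with $D\Delta - 1$, Duhamel's formula gives
\[
v(\cdot,t) = e^{t(D\Delta-1)}v_0 + \int_0^t e^{(t-s)(D\Delta-1)} u(\cdot,s)\,ds.
\]
The first term is nonnegative (in fact bounded below by $e^{-t}\min_{\overline\Omega}v_0$, but this decays), so the essential contribution must come from the integral term involving $u$.

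The key step is to exploit a lower bound on the heat kernel. On a smooth bounded domain the Neumann heat kernel $p(x,y,\tau)$ satisfies, for $\tau$ bounded away from $0$ and $\infty$, a uniform positive lower bound $p(x,y,\tau)\ge \kappa > 0$; equivalently, there is $\tau_0>0$ and $\kappa>0$ with $e^{\tau_0 D\Delta}\varphi \ge \kappa\int_\Omega \varphi$ for every nonnegative $\varphi$. Applying this with the mass bound $\int_\Omega u(\cdot,s)\ge c$ from Lemma~\ref{lm12}, I would restrict the Duhamel integral to the window $s\in[\max\{0,t-\tau_0\},t]$ (or use $s = t-\tau_0$ for $t\ge\tau_0$) to get, for all $t\ge \tau_0$,
\[
v(x,t) \ \ge\ \int_{t-\tau_0}^{t} e^{-(t-s)} e^{(t-s)D\Delta} u(\cdot,s)\,dx \cdot(\text{kernel bound}) \ \ge\ \kappa' c =: C_1 > 0,
\]
uniformly in $x\in\Omega$; for the short-time regime $0<t<\tau_0$ one uses instead the persistence of the initial lower bound $v_0\ge c_0$ together with nonnegativity of the $u$-term, so $v\ge e^{-\tau_0}c_0$ there. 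Taking $C=\min\{e^{-\tau_0}c_0,\,C_1\}$ yields the claimed uniform lower bound on $\Omega\times(0,T_{max})$.

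The main obstacle is making the heat-kernel lower bound genuinely uniform in time and independent of $T_{max}$ — one needs the bound $p(x,y,\tau_0)\ge\kappa$ at a single fixed time $\tau_0$ and then leverages the semigroup together with the $t$-independent mass bound $c$, rather than anything growing with $t$; this is exactly where the uniform-in-time nature of \eqref{13} is indispensable. A secondary technical point is handling the factor $e^{-(t-s)}$ and the transition between the initial-data regime and the large-time regime so that the final constant is strictly positive and depends only on $\Omega$, $D$, $c$, $c_0$; none of this requires $n\le 2$, so the lemma holds in general dimension, consistent with the reference to \cite{K.F.2015JMAA}. I would simply cite \cite{K.F.2015JMAA} for the precise quantitative form of this persistence-of-positivity estimate and record the resulting constant.
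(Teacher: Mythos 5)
Your proposal reconstructs exactly the argument of the cited reference \cite{K.F.2015JMAA}, which is also all the paper does here (it proves nothing itself and simply defers to that lemma), so you are following the same route: Duhamel's formula for the Neumann semigroup, a pointwise lower bound on the Neumann heat kernel at a fixed positive time $\tau_0$, the uniform-in-time mass bound $\int_\Omega u\ge c$ from Lemma~\ref{lm12}, and the initial lower bound $v_0\ge c_0$ for short times. One small technical fix: the kernel bound $p(x,y,\tau)\ge\kappa$ degenerates as $\tau\to 0^+$, so the window $s\in[t-\tau_0,t]$ in your Duhamel estimate does not work as written; shift it to something like $s\in[t-2\tau_0,\,t-\tau_0]$ (so $t-s\in[\tau_0,2\tau_0]$) for $t\ge 2\tau_0$, or, as you parenthetically suggest, evaluate the semigroup at the single lag $\tau_0$ applied to $v(\cdot,t-\tau_0)$ and use the lower bound on $\int_\Omega v$ inherited from $\int_\Omega u$ — either variant closes the argument and is independent of $T_{max}$ and of the dimension, consistent with the lemma.
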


Motivated from the paper \cite{F-J2020preprint,F-J2020JDE}, next, we derive the upper bound of $v$, which is a key to preclude the degeneracy of diffusion.

Let us introduce an auxiliary function $g$ which satisfies the following equation
\begin{equation}\label{24}
\begin{cases}
-D\Delta g+g=u,&x\in\Omega,\,t\in[0,T_{max}),\\
\frac{\partial g}{\partial \nu}=0,&x\in\partial\Omega,\,t\in[0,T_{max}),\\
g(x,0)=g_0(x)\geq0,&x\in\Omega,
\end{cases}
\end{equation}
where $u$ is the solution of \eqref{1} obtained in Lemma \ref{lm3}. Then, $g$ is non-negative since $u\geq0$ and has the following basic properties. The first property states that the $L^q$ norm of $g$ can be controlled by the $L^1$ norm of $u$ (cf. \cite{B-S1973JMSJ}).

\begin{lemma}\label{lm15}
Let $u\in C(\overline{\Omega})$ be a non-negative function such that $\int_\Omega u>0$. If $g\in C^2(\overline{\Omega})$ is a solution of the system \eqref{24}, then for any $q$ satisfying $1\leq q<\frac{n}{(n-2)_+}$ there exists a constant $C>0$ such that
$$\|g\|_{L^q}\leq C\|u\|_{L^1}.$$
\end{lemma}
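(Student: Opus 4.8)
The plan is to realize $g$ as the image of $u$ under the resolvent operator $(I-D\Delta_N)^{-1}$, where $\Delta_N$ denotes the Neumann Laplacian on $\Omega$, and then to exploit the smoothing properties of the associated heat semigroup. Since $g$ solves \eqref{24} with homogeneous Neumann boundary condition and $u\in C(\overline\Omega)\subset L^\infty(\Omega)$, one has the representation
\begin{equation*}
g=\frac{1}{D}\int_0^\infty e^{-s/D}\,e^{s\Delta_N}u\,\mathrm{d}s,
\end{equation*}
where the integral converges in, say, $L^2(\Omega)$ and the right-hand side indeed satisfies $-D\Delta g+g=u$ in $\Omega$ together with $\partial_\nu g=0$ on $\partial\Omega$.

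Next I would invoke the standard $L^p$--$L^q$ smoothing estimate for the Neumann heat semigroup on a bounded smooth domain: for $1\le p\le q\le\infty$ there is $C_1=C_1(\Omega,p,q)>0$ with
\begin{equation*}
\|e^{s\Delta_N}\varphi\|_{L^q(\Omega)}\le C_1\bigl(1+s^{-\frac{n}{2}(\frac1p-\frac1q)}\bigr)\|\varphi\|_{L^p(\Omega)},\qquad s>0,
\end{equation*}
the ``$1+$'' accounting for the fact that on a bounded domain $e^{s\Delta_N}$ does not decay to zero. Applying this with $p=1$ to $\varphi=u$ and inserting it into the representation formula yields
\begin{equation*}
\|g\|_{L^q(\Omega)}\le\frac{C_1}{D}\,\|u\|_{L^1(\Omega)}\int_0^\infty e^{-s/D}\bigl(1+s^{-\frac{n}{2}(1-\frac1q)}\bigr)\,\mathrm{d}s.
\end{equation*}

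It then remains to check finiteness of the time integral. The factor $e^{-s/D}$ controls the behaviour as $s\to\infty$, so convergence hinges only on the integrability of $s^{-\frac{n}{2}(1-1/q)}$ near $s=0$, which holds precisely when $\frac{n}{2}\bigl(1-\frac1q\bigr)<1$, i.e.\ when $q<\frac{n}{n-2}$ for $n\ge3$ and for every $q\in[1,\infty)$ when $n\le2$ --- exactly the range $1\le q<\frac{n}{(n-2)_+}$ claimed. (For $u\equiv0$ the inequality is trivial; the hypothesis $\int_\Omega u>0$ only serves to guarantee $g\not\equiv0$.) An alternative, semigroup-free route would be to write $g(x)=\int_\Omega G_D(x,y)\,u(y)\,\mathrm{d}y$ with $G_D$ the Green function of $-D\Delta+1$ under Neumann conditions, use the classical pointwise bound $0\le G_D(x,y)\le C|x-y|^{2-n}$ (with the logarithmic, resp.\ bounded, substitute when $n=2$, resp.\ $n=1$), and apply Minkowski's integral inequality, thereby reducing matters to the uniform-in-$x$ finiteness of $\bigl\|\,|x-\cdot|^{2-n}\,\bigr\|_{L^q(\Omega)}$, which again forces $q(n-2)<n$. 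The only genuinely non-elementary ingredient in either approach is the heat-kernel (equivalently Green-function) estimate on the bounded domain; once that is granted the argument is a short computation, so I expect no real obstacle beyond correctly tracking the exponents that single out the stated admissible range of $q$.
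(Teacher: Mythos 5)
Your proof is correct. The paper does not actually prove Lemma~\ref{lm15}: it simply cites Br\'ezis and Strauss~\cite{B-S1973JMSJ}, whose argument is a duality/test-function method for $-D\Delta g+g=u$ with $L^1$ data, and in fact gives the stronger $W^{1,q}$ bound $\|g\|_{W^{1,q}}\le C\|u\|_{L^1}$ for $1\le q<\frac{n}{n-1}$, from which the stated $L^q$ estimate follows by Sobolev embedding. Your route is a genuinely different, self-contained alternative: you realize $g=(I-D\Delta_N)^{-1}u$ (with $\Delta_N$ the Neumann Laplacian) via the Laplace-transform representation $g=\frac{1}{D}\int_0^\infty e^{-s/D}e^{s\Delta_N}u\,\mathrm{d}s$, and then invoke the $L^1$--$L^q$ smoothing estimate $\|e^{s\Delta_N}\varphi\|_{L^q}\le C\bigl(1+s^{-\frac{n}{2}(1-\frac{1}{q})}\bigr)\|\varphi\|_{L^1}$ for the Neumann heat semigroup on a bounded smooth domain. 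The exponent bookkeeping is right: the singular factor is integrable near $s=0$ precisely when $\frac{n}{2}\bigl(1-\frac{1}{q}\bigr)<1$, i.e.\ $q<\frac{n}{(n-2)_+}$, while $e^{-s/D}$ handles $s\to\infty$; and the only non-elementary input, the semigroup bound, is standard and ubiquitous in the chemotaxis literature. The Green-function variant you sketch is equally sound. What your semigroup computation does not deliver (and Br\'ezis--Strauss does) is the $W^{1,q}$ gradient estimate, but that is immaterial since the paper uses only the $L^q$ bound; and your observation that the hypothesis $\int_\Omega u>0$ plays no role in the inequality itself is accurate.
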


The second property tells us that $g$ satisfies a simple inequality.

\begin{lemma}\label{lm8}
Let $(u,v,w)$ be a solution of \eqref{1} and $g$ satisfies \eqref{24}. Then for all
$(x,t)\in\Omega\times(0,T_{max})$, we have
\begin{equation}\label{6}
g_t+\frac{1}{D}\gamma(v)u=\frac{1}{D}(I-D\Delta)^{-1}[\gamma(v)u]+\alpha (I-D\Delta)^{-1}[u F(w)].
\end{equation}
Moreover, there exists a constant $C>0$ such that
\begin{equation}\label{7}
g_t\leq Cg \ \ \text{for any}\ \ (x,t)\in\Omega\times(0,T_{max}).
\end{equation}
\end{lemma}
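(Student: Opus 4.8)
The plan is to derive the identity \eqref{6} by differentiating the elliptic problem \eqref{24} in time and substituting the first equation of \eqref{1}, and then to deduce the differential inequality \eqref{7} by invoking the order-preserving property of the resolvent operator $(I-D\Delta)^{-1}$ together with the upper bounds already available for $\gamma(v)$ and $F(w)$.

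First I would establish \eqref{6}. Writing $g = (I-D\Delta)^{-1}u$, we may differentiate in $t$ (the regularity of $u$ on $(0,T_{max})$ justifies this, and $(I-D\Delta)^{-1}$ commutes with $\partial_t$) to get $g_t = (I-D\Delta)^{-1}u_t$. Using the first equation of \eqref{1}, $u_t = \Delta(\gamma(v)u) + \alpha u F(w)$, and writing $\Delta(\gamma(v)u) = -\frac{1}{D}\big[(I-D\Delta) - I\big](\gamma(v)u) = -\frac{1}{D}\gamma(v)u + \frac{1}{D}(I-D\Delta)^{-1}\!\big[\text{wait}\big]$— more carefully, I would apply $(I-D\Delta)^{-1}$ to the identity $D\Delta(\gamma(v)u) = -\,\gamma(v)u + (I-D\Delta)(\gamma(v)u)$, i.e. $\Delta(\gamma(v)u) = \frac{1}{D}\big[(I-D\Delta)^{-1}(\gamma(v)u) \cdot (I-D\Delta)\big]$, so that $(I-D\Delta)^{-1}\Delta(\gamma(v)u) = \frac{1}{D}\big[(I-D\Delta)^{-1}(\gamma(v)u) - \gamma(v)u\big]$. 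Substituting, $g_t = \frac{1}{D}(I-D\Delta)^{-1}[\gamma(v)u] - \frac{1}{D}\gamma(v)u + \alpha (I-D\Delta)^{-1}[uF(w)]$, which is exactly \eqref{6} after moving the $\frac{1}{D}\gamma(v)u$ term to the left. Throughout I must be careful that the Neumann boundary condition in \eqref{24} is compatible with the Neumann condition for $u$ in \eqref{1}, so that the operator identities hold; this is the only mildly delicate bookkeeping point.

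Next, to obtain \eqref{7}, I would drop the nonnegative term $\frac{1}{D}\gamma(v)u \geq 0$ on the left of \eqref{6}, giving
\begin{equation*}
g_t \leq \frac{1}{D}(I-D\Delta)^{-1}[\gamma(v)u] + \alpha (I-D\Delta)^{-1}[uF(w)].
\end{equation*}
By Lemma \ref{lm7} there is a constant with $F(w) \leq C_1$, and by \eqref{4} (or rather the boundedness of $\gamma$ on the range of $v$, which is bounded below by Lemma \ref{lm2} and on which $\gamma$ is continuous and decreasing, hence bounded above by $\gamma$ evaluated at the lower bound) we have $\gamma(v) \leq C_2$. Hence $\gamma(v)u \leq C_2 u$ and $uF(w) \leq C_1 u$ pointwise. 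Since $(I-D\Delta)^{-1}$ is order-preserving on nonnegative data (the maximum principle for $-D\Delta + I$), and $u \geq 0$, we conclude
\begin{equation*}
g_t \leq \Big(\tfrac{C_2}{D} + \alpha C_1\Big)(I-D\Delta)^{-1}u = \Big(\tfrac{C_2}{D} + \alpha C_1\Big) g,
\end{equation*}
which is \eqref{7} with $C = \frac{C_2}{D} + \alpha C_1$.

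The only real obstacle is the justification that $\gamma(v)$ admits a uniform upper bound: unlike in \eqref{assumption1}, the standing hypothesis \eqref{4} allows $\gamma$ to blow up as $v \to 0^+$. This is precisely why Lemma \ref{lm2} is needed — once $v \geq c > 0$ uniformly, monotonicity of $\gamma$ gives $\gamma(v) \leq \gamma(c) =: C_2 < \infty$. With that in hand the rest is the order-preservation of the resolvent and a one-line estimate. (The time-differentiation of the elliptic identity is routine given the stated $C^{2,1}$ regularity of $u$.)
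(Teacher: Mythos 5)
Your proof is correct and follows essentially the same route as the paper: apply $(I-D\Delta)^{-1}$ to the first PDE (after writing $\Delta$ in terms of $I-D\Delta$) to obtain \eqref{6}, then drop the nonnegative term $\frac{1}{D}\gamma(v)u$, bound $\gamma(v)$ via Lemma \ref{lm2} and monotonicity, bound $F(w)$ via Lemma \ref{lm7}, and invoke order-preservation of $(I-D\Delta)^{-1}$ to land on \eqref{7}. One small bookkeeping slip: the line $D\Delta(\gamma(v)u) = -\gamma(v)u + (I-D\Delta)(\gamma(v)u)$ has the wrong sign (it should be $D\Delta(\gamma(v)u) = \gamma(v)u - (I-D\Delta)(\gamma(v)u)$), but the resolvent identity you actually use in the next line is stated correctly, so the argument goes through.
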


\begin{proof}
The first equation of \eqref{1} can be rewritten as
$$u_t=-\frac{1}{D}(I-D\Delta)[\gamma(v)u]+\alpha u F(w)+\frac{1}{D}\gamma(v)u.$$
Taking the operator $(I-D\Delta)^{-1}$ on both side of the above equation and noticing the definition of $g$, we can get \eqref{6} directly.

Now we prove \eqref{7}. According to the non-increasing property of $\gamma$ and Lemma \ref{lm2}, there exists a constant $C_1>0$ such that
$$\gamma(v)\leq C_1.$$
Noticing Lemma \ref{lm3}, Lemma \ref{lm7} and the smoothing property of $F$, we get a constant $C_2>0$ such that
$$F(w)\leq C_2.$$
Owing to the nonnegative of $u$, it holds that
$$\gamma(v)u\geq0.$$
Recall \eqref{24}. Then by the comparison principle for elliptic equations, we have
\begin{equation*}
\frac{1}{D}(I-D\Delta)^{-1}[\gamma(v)u]+\alpha (I-D\Delta)^{-1}[u F(w)]\leq\left(\frac{C_1}{D}+\alpha C_2\right)g,
\end{equation*}
which, combined with \eqref{6}, implies that
$$g_t\leq\left(\frac{C_1}{D}+\alpha C_2\right)g.$$
This finishes the proof.
\end{proof}

With the help of Lemma \ref{lm8} and the standard comparison principle for parabolic equations, we shall derive the upper bound of $v$.

\begin{lemma}\label{lm1}
Let $(u,v,w)$ be a solution of \eqref{1} and $g$ satisfies \eqref{24}. Then there is a constant $C>0$ such that
\begin{equation*}
v\leq C(g+1)\ \ \text{for any}\ \ (x,t)\in\Omega\times(0,T_{max}).
\end{equation*}
Moreover, if $T_{max}<+\infty$, there exists $C_0>0$ such that
$$v\leq C_0\ \ \text{for any}\ \ (x,t)\in\Omega\times(0,T_{max}).$$

\end{lemma}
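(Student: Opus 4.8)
The plan is to compare $v$ with the function $C(g+1)$ via the parabolic comparison principle, exploiting that $g$ satisfies the ``good'' differential inequality \eqref{7} and that $v$ solves the second equation of \eqref{1}. First I would recall that $g$ solves $-D\Delta g + g = u$ in the classical sense, so in particular $D\Delta g = g - u$, and hence for the candidate supersolution $\bar v := \kappa(g + 1)$ (with $\kappa>0$ to be fixed) one computes
\begin{equation*}
\bar v_t - D\Delta \bar v + \bar v = \kappa g_t - \kappa D\Delta g + \kappa(g+1) = \kappa g_t - \kappa(g-u) + \kappa g + \kappa = \kappa g_t + \kappa u + \kappa.
\end{equation*}
By \eqref{7} we have $g_t \le C g$ for some $C>0$, and since $g \ge 0$ and $u \ge 0$, the right-hand side is at least $\kappa u$ (indeed at least $-\kappa C g + \kappa u + \kappa$ is not quite what we want, so more carefully: $\bar v_t - D\Delta\bar v + \bar v = \kappa g_t + \kappa u + \kappa \ge \kappa u + \kappa \ge u$ as soon as $\kappa \ge 1$, using $g_t$ could be negative — so here I would instead simply use $g_t \geq $ nothing and note that $\kappa u + \kappa \geq u$ requires only $\kappa \geq 1$ together with $g_t$ being handled). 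The cleaner route: since $g\ge 0$, \eqref{7} does not by itself bound $g_t$ from below, so I would instead use equation \eqref{6} directly, which gives $g_t = \frac1D(I-D\Delta)^{-1}[\gamma(v)u] + \alpha(I-D\Delta)^{-1}[uF(w)] - \frac1D\gamma(v)u \ge -\frac1D\gamma(v)u \ge -\frac{C_1}{D}u$, where $C_1$ bounds $\gamma(v)$ from Lemma \ref{lm2}. Then
\begin{equation*}
\bar v_t - D\Delta\bar v + \bar v = \kappa g_t + \kappa u + \kappa \ge -\frac{\kappa C_1}{D}u + \kappa u + \kappa \ge \kappa\Bigl(1 - \frac{C_1}{D}\Bigr)u + \kappa.
\end{equation*}

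Here I see the main subtlety: if $D$ is not large relative to $C_1$, the coefficient of $u$ can be negative, so this naive choice of supersolution may fail. To fix this I would instead work with the operator $(I - D\Delta)^{-1}$ more directly — following the stated strategy ``use the maximum principle for the inverse operator $(I-D\Delta)^{-1}$''. Rewrite the $v$-equation as $(I - D\Delta)v = v + u - v_t$, hmm; better: from $v_t = D\Delta v + u - v$ we get $v_t + v - D\Delta v = u$, i.e. formally $v = (I - D\Delta)^{-1}(u - v_t)$. The key identity to exploit is that $g = (I-D\Delta)^{-1}u$ exactly, so $v - g = (I-D\Delta)^{-1}(-v_t) = -(I-D\Delta)^{-1}v_t$. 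This suggests setting $z := v - g$ and deriving an equation for $z$: from the two PDEs, $z_t = v_t - g_t = D\Delta v + u - v - g_t$. Since $D\Delta v = D\Delta z + D\Delta g = D\Delta z + (g - u)$, we get $z_t = D\Delta z + g - v - g_t = D\Delta z - z - g_t$. Now invoking \eqref{7}, $-g_t \ge -Cg$, which is still the wrong sign. The honest conclusion is that the clean comparison works without any sign issue once we observe that $g$ and $v$ both start from controlled data and use \eqref{7} as an \emph{upper} bound on $g_t$ together with the fact that $v$ is a \emph{subsolution} of a problem driven by $g$.

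So the actual argument I would carry out: let $\xi(t)$ solve the ODE $\xi' = C\xi$ with $\xi(0)$ chosen so that $g(\cdot,0) \le \xi(0)$ pointwise (possible since $g_0$ is continuous on $\overline\Omega$); by \eqref{7} and the comparison principle for the scalar inequality, $g(x,t) \le \xi(t) = \xi(0)e^{Ct}$. But this only gives an exponentially growing bound, which is exactly why the final statement of Lemma \ref{lm1} is conditional: \textbf{for $T_{max} < \infty$}, $\xi(t) \le \xi(0)e^{CT_{max}} =: C_0'$, a finite constant on $[0,T_{max})$, and then $v \le C(g+1) \le C(C_0' + 1) =: C_0$. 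Thus the two assertions are proved together: (i) the pointwise inequality $v \le C(g+1)$ holds on $\Omega\times(0,T_{max})$ for a constant $C$ depending only on the data and the coefficients, established by constructing $\bar v = C(g+1)$ and verifying it is a supersolution of the $v$-equation using $D\Delta g = g-u$ and the differential inequality for $g$ from \eqref{6}–\eqref{7}, choosing $C$ large enough to dominate the initial data $v_0$ and to absorb the (bounded) coefficient $\gamma(v) \le C_1$; (ii) when $T_{max} < \infty$, combine (i) with the Gr\"onwall-type bound $g \le g_0 e^{Ct}$ coming from \eqref{7} to get the uniform bound $C_0$ on $[0,T_{max})$.

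The hard part, and the step I would be most careful about, is the supersolution construction in (i): one must check not only the interior differential inequality but also the boundary condition ($\partial_\nu \bar v = C\partial_\nu g = 0$, which holds by \eqref{24}) and the initial ordering ($\bar v(\cdot,0) = C(g_0 + 1) \ge v_0$, which fixes a lower bound on $C$ in terms of $\|v_0\|_{L^\infty}$ and requires $g_0 + 1 > 0$, true since $g_0 \ge 0$). The delicate point in the interior inequality is getting the sign of the forcing right; the resolution is that we do \emph{not} need $g_t$ bounded below, because in computing $\bar v_t - D\Delta\bar v + \bar v = C g_t + Cu + C$ we substitute for $g_t$ using the \emph{identity} \eqref{6}, not the inequality \eqref{7}: this gives $Cg_t + Cu + C = \frac{C}{D}(I-D\Delta)^{-1}[\gamma(v)u] + C\alpha(I-D\Delta)^{-1}[uF(w)] + C(1 - \frac{\gamma(v)}{D})u \cdot(\text{no})$ — rather, $Cg_t = \frac{C}{D}(I-D\Delta)^{-1}[\gamma(v)u] + C\alpha(I-D\Delta)^{-1}[uF(w)] - \frac{C}{D}\gamma(v)u$, and the first two terms are nonnegative while the comparison principle bounds them by $C(\frac{C_1}{D}+\alpha C_2)g$; meanwhile $-\frac{C}{D}\gamma(v)u + Cu \ge 0$ iff $\gamma(v) \le D$, which fails in general. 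So ultimately the correct supersolution is $\bar v = C(g+1)$ with the verification $\bar v_t - D\Delta\bar v + \bar v \ge Cu$ — wait, we need $\ge u$. The resolution used in \cite{F-J2020JDE} and which I would follow here is: add a multiple of $g$ into the comparison, i.e. show $v$ satisfies $v_t - D\Delta v + v = u$ while $\bar v := C g + Ce^{Ct}$ satisfies $\bar v_t - D\Delta\bar v + \bar v = Cg_t + Cu + C^2 e^{Ct} + Cge^{Ct} \ge Cu \ge u$ for $C \ge 1$, \emph{using $g_t \ge -\frac{C_1}{D}u$ and choosing the $e^{Ct}$ term large enough} — but this reintroduces exponential growth. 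Hence I conclude: the pointwise bound $v \le C(g+1)$ is the robust statement, its proof is the supersolution verification with $g_t$ replaced via \eqref{6} and the elliptic comparison principle (Lemma \ref{lm8}'s proof template), and the uniform bound is \emph{only} claimed for $T_{max} < \infty$ precisely because that is where $g$'s exponential-in-$t$ bound becomes a finite constant.
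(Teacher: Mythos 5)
Your proposal correctly identifies the central difficulty but never resolves it, and in fact you say as much several times (``this reintroduces exponential growth,'' ``which fails in general''). The sticking point you run into --- that in verifying a supersolution inequality one is left with a term $-\tfrac{\gamma(v)}{D}u + u$ whose sign is uncontrolled unless $D > \gamma(v)$ --- is precisely where the paper introduces a device you do not use: the auxiliary function
$$
\Gamma(s) := \frac{1}{D}\int_{C_1}^{s}\gamma(x)\,dx,
$$
where $C_1 \ge 0$ is chosen (possible because $\gamma(v)\to 0$ as $v\to\infty$, assumption \eqref{4}) so that $0<\gamma(C_1)<D$. Using the chain rule one has the exact identity $\frac{1}{D}\gamma(v)u = \Gamma(v)_t - D\Delta\Gamma(v) + \gamma'(v)|\nabla v|^2 + \frac{1}{D}\gamma(v)v$, and substituting this together with identity \eqref{6} into $v_t - D\Delta v + v = -D\Delta g + g$ yields
$$
v_t - D\Delta v + v = (g+\Gamma(v))_t - D\Delta(g+\Gamma(v)) + (g+\Gamma(v)) + \gamma'(v)|\nabla v|^2 + \Big(\tfrac{1}{D}\gamma(v)v - \Gamma(v)\Big) - \tfrac{1}{D}(I-D\Delta)^{-1}[\gamma(v)u] - \alpha(I-D\Delta)^{-1}[uF(w)].
$$
Now $\gamma'(v)|\nabla v|^2 \le 0$, the two $(I-D\Delta)^{-1}$ terms are nonnegative by the elliptic comparison principle, and a short calculation using monotonicity of $\gamma$ shows $\frac{1}{D}\gamma(v)v - \Gamma(v) \le \frac{C_1\gamma(v)}{D} \le \frac{C_2}{D}$. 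So $v$ is a subsolution of the parabolic problem solved (up to the constant) by $g+\Gamma(v)$, and the parabolic comparison principle gives $v \le g + \Gamma(v) + C_3$. The punchline --- which is exactly what you are missing --- is that $\Gamma(v) \le \frac{\gamma(C_1)}{D}v$ with $\frac{\gamma(C_1)}{D}<1$, so the $\Gamma(v)$ on the right can be absorbed: $v \le \frac{1}{1-\gamma(C_1)/D}(g+C_3)$. Nothing in this requires $D$ to dominate $\sup\gamma$; the decay $\gamma\to 0$ is what makes the absorption possible.

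Your treatment of the second assertion (if $T_{\max}<\infty$, Gr\"onwall on $g_t\le Cg$ from \eqref{7} gives a finite bound for $g$ and hence for $v$) is correct and matches the paper. But as it stands the proposal does not prove the pointwise bound $v\le C(g+1)$; you diagnose the sign obstruction correctly and then circle back to it without curing it. The cure is not ``add a multiple of $g$'' or a growing exponential factor, but the comparison against $g + \Gamma(v)$ followed by the self-absorption of $\Gamma(v)$ into $v$ as above.
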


\begin{proof}
With the hypothesis \eqref{4}, we can choose a constant $C_1\geq 0$ large enough such that
$$0<\gamma(C_1)<D.$$
Let
$$\Gamma(s):=\frac{1}{D}\int_{C_1}^s\gamma(x)dx\quad\text{for all }s\geq0,$$
which gives
\begin{align*}
\gamma(v)u=&\gamma(v)(v_t-D\Delta v+v)\\
=&D\Gamma_t(v)-D^2\Delta\Gamma(v)+D\gamma'(v)|\nabla v|^2+\gamma(v)v.
\end{align*}
This, combined with Lemma \ref{lm8}, implies
\begin{equation}\label{8}
\begin{aligned}
v_t-D\Delta v+v=&-D\Delta g+g\\
=&g_t-D\Delta g+g+\frac{1}{D}\gamma(v)u\\
&\quad-\frac{1}{D}(I-D\Delta)^{-1}[\gamma(v)u]-\alpha (I-D\Delta)^{-1}[u F(w)]\\
=&(g+\Gamma(v))_t-D\Delta(g+\Gamma(v))+(g+\Gamma(v))+\gamma'(v)|\nabla v|^2+\frac{1}{D}\gamma(v)v-\Gamma(v)\\
&\quad-\frac{1}{D}(I-D\Delta)^{-1}[\gamma(v)u]-\alpha (I-D\Delta)^{-1}[u F(w)].
\end{aligned}
\end{equation}
Now, we estimate the terms on the right hand side of \eqref{8}. In view of the monotone decreasing property of $\gamma$, Lemma \ref{lm2} and the definition of $\Gamma$, we see that there exists a constant $C_2>0$ such that
\begin{align*}
\frac{1}{D}\gamma(v)v-\Gamma(v)=&\frac{1}{D}\gamma(v)v+\frac{1}{D}\int^{C_1}_v\gamma(x)dx\\
\leq&\frac{1}{D}\left[\gamma(v)v+\gamma(v)(C_1-v)\right]\\
=&\frac{C_1}{D}\gamma(v)\leq\frac{C_2}{D}\quad\text{for }0\leq v\leq C_1
\end{align*}
or otherwise
\begin{align*}
\frac{1}{D}\gamma(v)v-\Gamma(v)=&\frac{1}{D}\gamma(v)v-\frac{1}{D}\int_{C_1}^v\gamma(x)dx\\
\leq&\frac{1}{D}\left[\gamma(v)v-\gamma(v)(v-C_1)\right]\\
=&\frac{C_1}{D}\gamma(v)\leq\frac{C_2}{D}\quad\text{for }v\geq C_1.
\end{align*}
Due to the non-negativity of $-\gamma'(v)$, $\gamma(v)u$ as well as $u F(w)$ and the comparison principle for elliptic equations, we get from \eqref{8}
$$v_t-D\Delta v+v\leq(g+\Gamma(v))_t-D\Delta(g+\Gamma(v))+(g+\Gamma(v))+\frac{C_2}{D}.$$
Noticing the initial data, we can choose a constant $C_3>0$ large enough such that $\frac{C_2}{D}\leq C_3$ and
$$v_0\leq g_0+\Gamma(v_0)+C_3.$$
Hence, the comparison principle for parabolic equations gives that
$$v\leq g+\Gamma(v)+C_3,$$
which along with the fact
$$\Gamma(v)\leq\frac{\gamma(C_1)}{D}v,$$
implies
$$v\leq \frac{1}{1-\frac{\gamma(C_1)}{D}}(g+C_3).$$
With the aid of Lemma \ref{lm8}, if $T_{max}<+\infty$, then there exists a constant $C_4>0$ such that
$$v\leq C_4.$$
Hence we complete the proof of this lemma.
\end{proof}

Note the upper bound derived in Lemma \ref{lm1} may depend on $T_{max}$, see \eqref{7}. The following lemma asserts the upper bound of $v$ which is independent of $T_{max}$ under additional condition \eqref{23}.

\begin{lemma}
Let $(u,v,w)$ be a solution of \eqref{1}. If the motility function $\gamma$ satisfies the condition \eqref{23}, then there exists a constant $C>0$ independent of $T_{max}$ such that
\begin{equation*}
v\leq C\ \ \text{for any}\ \ (x,t)\in\Omega\times(0,T_{max}).
\end{equation*}

\end{lemma}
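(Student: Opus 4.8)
The plan is to upgrade the time-dependent upper bound of $v$ from Lemma~\ref{lm1} to one that is independent of $T_{max}$ by replacing the crude estimate $g_t \leq Cg$ of Lemma~\ref{lm8} with a genuine differential inequality for $g$ that dissipates. The starting point is the identity \eqref{6}, namely
\begin{equation*}
g_t + \frac{1}{D}\gamma(v)u = \frac{1}{D}(I-D\Delta)^{-1}[\gamma(v)u] + \alpha (I-D\Delta)^{-1}[uF(w)].
\end{equation*}
Using Lemma~\ref{lm7} (so $F(w)\leq C_2$) and Lemma~\ref{lm15} applied to the elliptic problem with right-hand sides $\gamma(v)u$ and $uF(w)$, together with the mass bound \eqref{13}, one should be able to control the $L^q$ norms of the two resolvent terms by $\|u\|_{L^1}\leq C$, uniformly in time. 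The point is that $(I-D\Delta)^{-1}$ maps $L^1$-bounded data into $L^q$-bounded output for $q<\frac{n}{(n-2)_+}$, which in dimension $n\leq 2$ means every $q<\infty$. This gives a uniform bound on $g$ in $L^q(\Omega)$ for large $q$, or even a pointwise strategy if one works with the sup norm carefully.

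Next I would exploit the condition \eqref{23}: there are $k>0$ and $\bar v>0$ with $v^k\gamma(v)\geq c_* >0$ for $v>\bar v$. The role of this assumption is to show that when $v$ is large, the motility $\gamma(v)$ cannot decay too fast, so that the dissipative term $\frac{1}{D}\gamma(v)u$ on the left of \eqref{6} — or more precisely a term like $\gamma(v)v$ appearing after the manipulation in \eqref{8} — retains enough strength to absorb the production. Concretely, I expect to revisit the computation \eqref{8}, keep the term $\frac{1}{D}\gamma(v)v$ rather than discarding it via $\gamma' \le 0$, and combine it with the good sign of $-\gamma'(v)|\nabla v|^2$. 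On the region where $v \geq \bar v$, the lower bound $\gamma(v) \geq c_* v^{-k}$ yields $\gamma(v) v \geq c_* v^{1-k}$, and (together with the uniform $L^q$ control of $g$ obtained in the first step) one can set up an absorbing estimate. The cleanest route is probably an $L^p$ testing argument on $v$ or on $g + \Gamma(v)$, using a uniform Grönwall inequality (Lemma~\ref{lm9}) or the ODE comparison Lemma~\ref{lm16} to convert a differential inequality with bounded forcing on finite time windows into a time-independent bound.

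The main obstacle I anticipate is closing the feedback loop without circularity: the bound on $g$ (and hence on $v$) through Lemma~\ref{lm15} requires only $\|u\|_{L^1}$, which is already under control by Lemma~\ref{lm12}, so that part is clean; but the passage from an $L^q$ bound on $v$ to an $L^\infty$ bound on $v$ requires additional regularity of the $v$-equation, for which one typically invokes Lemma~\ref{lm10} — and that needs $u \in L^\infty((0,T_{max}); L^p(\Omega))$ for suitable $p$, which at this stage of the paper has not yet been established independently of $T_{max}$. The resolution should be that in two dimensions $v \leq C(g+1)$ from Lemma~\ref{lm1} combined with a \emph{uniform} $L^q$ bound on $g$ for all finite $q$ already gives $v$ bounded in every $L^q$, and then a bootstrap on the $v$-equation $v_t = D\Delta v + u - v$ using semigroup smoothing (or Lemma~\ref{lm10} with $\phi = u$, noting $\|u\|_{L^1}$ suffices to start and the gain in integrability is iterated) pushes this to $L^\infty$. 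If $n=1$ the argument is easier still since $L^1 \hookrightarrow$ better Sobolev embeddings are available. I would therefore structure the proof as: (i) uniform $L^q$ bound on $g$ via Lemma~\ref{lm15} and \eqref{13}; (ii) uniform $L^q$ bound on $v$ via Lemma~\ref{lm1}; (iii) use \eqref{23} in the reworked \eqref{8} to get a time-independent differential inequality; (iv) conclude via Lemma~\ref{lm9} or Lemma~\ref{lm16}, then Lemma~\ref{lm10} for the $L^\infty$ upgrade.
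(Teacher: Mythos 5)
Your ingredient list is mostly right---Lemma~\ref{lm15} with the $L^1$ mass bound, Lemma~\ref{lm1}, the inequality $g_t\le Cg$ from Lemma~\ref{lm8}, the uniform Gr\"onwall lemma, and the observation that \eqref{23} controls how fast $\gamma$ can decay---but the assembly is not what the paper does, and the two places where you are vague are exactly where your plan would fail to close.

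First, your step (iii) (``rework \eqref{8}, keep $\gamma(v)v$, absorb on the set $\{v\ge\bar v\}$'') is not the paper's mechanism and is unlikely to produce a usable dissipation. The paper instead writes the first equation in operator form $\bigl((I-D\Delta)g\bigr)_t+\frac1D(I-D\Delta)(\gamma(v)u)=\frac1D\gamma(v)u+\alpha uF(w)$, tests with $g$, and obtains the energy inequality
\begin{equation*}
\frac12\frac{d}{dt}\Bigl(\int_\Omega g^2+D\int_\Omega|\nabla g|^2\Bigr)+\frac1D\int_\Omega\gamma(v)u^2\le C\,\|g\|_{L^\infty}.
\end{equation*}
The good term here is $\int_\Omega\gamma(v)u^2$, not $\gamma(v)v$. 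Hypothesis \eqref{23} then enters through H\"older and Young: for $\max\{n/2,1\}<p<2$ one has $\|g\|_{L^\infty}\le C\|u\|_{L^p}\le\varepsilon\int_\Omega\gamma(v)u^2+C_\varepsilon\bigl(\int_\Omega\gamma^{-p/(2-p)}(v)\bigr)^{(2-p)/p}$, and the last integral is bounded uniformly because $\gamma^{-1}(v)\le\gamma^{-1}(\bar v)+C^{-1}v^k$, combined with $v\le C(g+1)$ and the $L^q$ bound on $g$ from Lemma~\ref{lm15}. This precise splitting of $\|u\|_{L^p}$ is the step that makes \eqref{23} bite, and it is absent from your sketch.

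Second, your step (iv) does not close. You correctly flag the circularity in invoking Lemma~\ref{lm10} (it needs an $L^p$ bound on $u$ that is not yet available uniformly in $T_{max}$), but your proposed fix via iterated semigroup smoothing does not resolve it: $\|u\|_{L^1}\le C$ alone does not give $L^\infty$ control of $v$ in dimension $2$. The paper's route is different and purely on the $g$ level: the energy inequality above, after Gr\"onwall, delivers a uniform bound on $\int_\Omega g^2+D\int_\Omega|\nabla g|^2$ and, crucially, a space-time bound $\int_t^{t+\tau}\int_\Omega\gamma(v)u^2\le C$, hence by the $\|g\|_{L^\infty}$ estimate also $\int_t^{t+\tau}\int_\Omega g\le C$. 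Feeding this together with the pointwise ODE $g_t\le Cg$ from Lemma~\ref{lm8} into the uniform Gr\"onwall Lemma~\ref{lm9} yields $g\le C$ on $\Omega\times[\tau,T_{max})$, and then $v\le C$ follows from Lemma~\ref{lm1}. So the $L^\infty$ upgrade comes from the ODE $g_t\le Cg$ plus space-time integrability of $g$, not from parabolic smoothing of the $v$-equation.

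\end{document}
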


\begin{proof}
We can rewrite the first equation of \eqref{1} as
$$\left((I-D\Delta)g\right)_t+\frac{1}{D}(I-D\Delta)(\gamma(v)u)=\frac{1}{D}\gamma(v)u+\alpha uF(w).$$
Multiplying the above equation by $g=(I-D\Delta)^{-1}u$ and integrating the result over $\Omega$, we have
\begin{equation}\label{21}
\frac{1}{2}\frac{d}{dt}\left(\int_\Omega g^2+D\int_\Omega|\nabla g|^2\right)+\frac{1}{D}\int_\Omega\gamma(v)u^2=\frac{1}{D}\int_\Omega\gamma(v)ug+\alpha \int_\Omega uF(w)g.
\end{equation}
In  view of the assumption \eqref{4} and Lemma \ref{lm2}, we get $C_1>0$ fulfilling
\begin{equation}\label{18}
\gamma(v)\leq C_1.
\end{equation}
Noticing Lemma \ref{lm3}, Lemma \ref{lm7} and the smoothing property of $F$, we get a constant $C_2>0$ such that
\begin{equation}\label{19}
F(w)\leq C_2.
\end{equation}
Substituting \eqref{18} and \eqref{19} into \eqref{21}, we obtain from Lemma \ref{lm12} that
\begin{equation}\label{22}
\frac{1}{2}\frac{d}{dt}\left(\int_\Omega g^2+D\int_\Omega|\nabla g|^2\right)+\frac{1}{D}\int_\Omega\gamma(v)u^2\leq\left(\frac{C_1}{D}+\alpha C_2\right)C_3\|g\|_{L^\infty},
\end{equation}
holds for some constant $C_3>0$. Moreover, it follows from \eqref{24} that
\begin{equation*}
D\int_\Omega|\nabla g|^2+\int_\Omega g^2=\int_\Omega ug\leq C_3\|g\|_{L^\infty}
\end{equation*}
which, added to \eqref{22} yields
\begin{equation}\label{28}
\begin{aligned}
&\frac{d}{dt}\left(\int_\Omega g^2+D\int_\Omega|\nabla g|^2\right)+\left(\int_\Omega g^2+ D\int_\Omega|\nabla g|^2\right)+\frac{2}{D}\int_\Omega\gamma(v)u^2\\
\leq&2\left(\frac{C_1}{D}+\alpha C_2+1\right)C_3\|g\|_{L^\infty}.
\end{aligned}
\end{equation}
Now we estimate the right hand side of the above inequality. For any $\max\{\frac{n}{2},1\}<p<2$, thanks to the Sobolev embedding theorem, the standard elliptic estimate and H\"{o}lder's inequality, we can find constants $C_4,C_5,C_6>0$ such that
\begin{equation*}
\begin{aligned}
\|g\|_{L^\infty}\leq &C_4\|g\|_{W^{2,p}}\leq C_5\|u\|_{L^p}\\
\leq&\frac{1}{2D}\frac{1}{\left(\frac{C_1}{D}+\alpha C_2+1\right)C_3}\int_\Omega\gamma(v)u^2+C_6\left(\int_\Omega \gamma^{-\frac{p}{2-p}}(v)\right)^{\frac{2-p}{p}}.
\end{aligned}
\end{equation*}
In view of the assumption \eqref{23}, there exist $k>0$, $\overline{v}>0$ and $C_7>0$ such that
$$v^k\gamma(v)\geq C_7\ \ \text{for all}\ \ v>\overline{v}$$
i.e.
$$\gamma^{-1}(v)\leq C^{-1}_7 v^k\ \ \text{for all}\ \ v>\overline{v}.$$
Noticing the non-increasing property of $\gamma$, we get
$$\gamma^{-1}(v)\leq\gamma^{-1}(\overline{v})\ \ \text{for all}\ \ 0\leq v\leq \overline{v}.$$
Therefore, it holds that
$$\gamma^{-1}(v)\leq\gamma^{-1}(\overline{v})+C^{-1}_7 v^k\ \ \text{for all}\ \ v\geq 0.$$
Hence, using Lemma \ref{lm1} and Lemma \ref{lm15}, there exist constants $C_8,C_9,C_{10}>0$ such that
\begin{equation}\label{26}
\begin{aligned}
\int_\Omega \gamma^{-\frac{p}{2-p}}(v)\leq&\int_{\Omega}\left(\gamma^{-1}(\overline{v})+C^{-1}_7 v^k\right)^{\frac{p}{2-p}}\\
\leq&\int_{\Omega}\left(\gamma^{-1}(\overline{v})+C^{-1}_7 \left(C_8(g+1)\right)^{k}\right)^{\frac{p}{2-p}}\\
\leq&C_9\int_{\Omega} g^{\frac{pk}{2-p}} d x+C_9\\
\leq&C_{10}
\end{aligned}
\end{equation}
which implies that
\begin{equation}\label{27}
\|g\|_{L^\infty}\leq \frac{1}{2D}\frac{1}{\left(\frac{C_1}{D}+\alpha C_2+1\right)C_3}\int_\Omega\gamma(v)u^2+C_6C_{10}^{\frac{2-p}{p}}.
\end{equation}
Combining \eqref{28}, \eqref{26} with \eqref{27}, we get
\begin{equation*}
\begin{aligned}
&\frac{d}{dt}\left(\int_\Omega g^2+D\int_\Omega|\nabla g|^2\right)+\left(\int_\Omega g^2+ D\int_\Omega|\nabla g|^2\right)+\frac{1}{D}\int_\Omega\gamma(v)u^2\\
\leq&2\left(\frac{C_1}{D}+\alpha C_2+1\right)C_3C_6C_{10}^{\frac{2-p}{p}}
\end{aligned}
\end{equation*}
which along with Gr\"{o}nwall's inequality yields a constant $C_{11}>0$ such that
$$\int_\Omega g^2+D\int_\Omega|\nabla g|^2\leq C_{11}$$
and
\begin{equation}\label{25}
\int_t^{t+\tau}\int_\Omega\gamma(v)u^2\leq C_{11}\quad\text{for all }t\in(0,T_{max}-\tau)
\end{equation}
with $\tau=\min\{1,\frac{1}{2}T_{max}\}$. Due to \eqref{27} and \eqref{25}, the following inequality
\begin{equation}\label{nie}
\int_t^{t+\tau}\int_\Omega g\leq|\Omega|\int_t^{t+\tau}\|g\|_{L^\infty}\leq C_{12}\ \ \text{for all}\ \ t\in(0,T_{max}-\tau),
\end{equation}
holds for some constant $C_{12}>0$. According to Lemma \ref{lm8}, we can find a constant $C_{13}>0$ such that
$$g_t\leq C_{13}g\ \ \text{for all}\ \ t\in(0,T_{max}).$$
Using Lemma \ref{lm9} with \eqref{nie} and the definition of $\tau$, we get a constant $C_{14}>0$ so that
$$g\leq C_{14}=\frac{C_{12}}{|\Omega|\tau}e^{C_{13}}\ \ \text{for any}\ \ (x,t)\in\Omega\times(\tau,T_{max})$$
which, along with Lemma \ref{lm8} applied to any $(x,t)\in\Omega\times[0,\tau]$, asserts that
$$g\leq C_{15}\ \ \text{for any}\ \ (x,t)\in\Omega\times[0,T_{max})$$
holds for some constant $C_{15}>0$. This completes the proof by using Lemma \ref{lm1}.
\end{proof}

\subsection{$L^\infty$-estimates of $u$}
Once we get the positive lower and upper bounds of $v$ in the previous section, then the diffusion motility function $\gamma(v)$ is neither degenerate nor singular and the estimate of $L^\infty$-norm of $u$ essentially can be derived by the procedures as shown in \cite{J-S-W2020JDE}. For completeness, we briefly demonstrate the mains steps below.
\subsubsection{The space-time \texorpdfstring{$L^2$}{L2}-bound of \texorpdfstring{$u$}{u}}
In this subsection, we aim to derive the bound of $u$ in space-time $L^2$-norm by the classical duality-based arguments (cf. \cite{J-S-W2020JDE,L-W2015CPDE,T-W2017MMMAS}).
For convenience, we introduce some notations here. Let $A_0$ denote the self-adjoint realization of $-\Delta$ defined in the Hilbert space
$$L_{\bot}^2(\Omega)=\left\{\phi\in L^2(\Omega)\,\Big|\,\int_\Omega\phi=0\right\},$$
with its domain
$$D(A_0)=\left\{\phi\in W^{2,2}(\Omega)\cap L_{\bot}^2(\Omega)\,\Big|\, \frac{\partial\phi}{\partial\nu}=0\,\text{on}\,\partial\Omega\right\}.$$
Then $A_0$ is self-adjoint and possesses bound self-adjoint fractional powers $A_0^{-\beta}$ for any $\beta>0$ (cf. \cite{M.S.1984AJM}).

Now the classical duality-based arguments lead to the boundedness of $u$ in space-time $L^2$.

\begin{lemma}\label{lm4}
Let $(u,v,w)$ be a solution of \eqref{1}. Then there exists a constant $C>0$ such that
$$\int_t^{t+\tau}\int_{\Omega}u^2\leq C\ \ \text{for all}\ \ t\in[0,T_{max}-\tau)$$
with $\tau=\min\{1,\frac{1}{2}T_{max}\}$.
\end{lemma}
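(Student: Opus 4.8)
The plan is to run the standard duality (``dual-function'') argument, exploiting the fact that, by the lower bound of Lemma~\ref{lm2}, the upper bound of Lemma~\ref{lm1} and the monotonicity of $\gamma$, along the solution we have two constants $0<\gamma_*\le\gamma(v)\le\gamma^*$ on $\Omega\times(0,T_{max})$, so that $\gamma(v)$ is here neither singular nor degenerate. Writing $\bar u(t):=\frac1{|\Omega|}\int_\Omega u$, one has $\int_\Omega u^2=\int_\Omega(u-\bar u)^2+|\Omega|\bar u^2$, and since $\bar u$ is bounded by Lemma~\ref{lm12} it suffices to control $\int_t^{t+\tau}\int_\Omega(u-\bar u)^2$. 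First I would introduce $\varphi:=A_0^{-1}(u-\bar u)$, i.e.\ the solution of $-\Delta\varphi=u-\bar u$ in $\Omega$ with $\partial_\nu\varphi=0$ and $\int_\Omega\varphi=0$, together with the functional $y(t):=\tfrac12\|A_0^{-1/2}(u(\cdot,t)-\bar u(t))\|_{L^2}^2=\tfrac12\int_\Omega|\nabla\varphi|^2$, which is finite at $t=0$ since $u_0\in C^0(\overline\Omega)\subset L^2(\Omega)$.

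Next I would differentiate $y$. Using $-\Delta\varphi_t=u_t-\bar u_t$ and $\int_\Omega\varphi=0$ one gets $y'(t)=\int_\Omega u_t\varphi$; substituting the first equation of \eqref{1}, integrating by parts twice (the no-flux conditions on $u$ and $v$ give $\partial_\nu(\gamma(v)u)=0$, and $\partial_\nu\varphi=0$) and using $-\Delta\varphi=u-\bar u$ leads to
$$y'(t)+\int_\Omega\gamma(v)u(u-\bar u)=\alpha\int_\Omega uF(w)\,\varphi .$$
Writing $u=(u-\bar u)+\bar u$ in the dissipation term and invoking $\gamma(v)\ge\gamma_*$, Young's inequality and the boundedness of $\bar u$, the left side is $\ge y'(t)+\gamma_*\int_\Omega(u-\bar u)^2-\tfrac{\gamma_*}{4}\int_\Omega(u-\bar u)^2-C$. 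For the right side I would use elliptic regularity $\|\varphi\|_{W^{2,2}}\le C\|u-\bar u\|_{L^2}$, the embedding $W^{2,2}(\Omega)\hookrightarrow L^\infty(\Omega)$ (valid since $n\le2$), the bound $F(w)\le C$ from Lemmas~\ref{lm3} and \ref{lm7}, and crucially the mass bound $\int_\Omega u\le C$ of Lemma~\ref{lm12}, to obtain
$$\alpha\int_\Omega uF(w)\,\varphi\le C\|\varphi\|_{L^\infty}\int_\Omega u\le C\|u-\bar u\|_{L^2}\le\tfrac{\gamma_*}{4}\int_\Omega(u-\bar u)^2+C .$$
Combining the two bounds gives $y'(t)+\tfrac{\gamma_*}{2}\int_\Omega(u-\bar u)^2\le C$ with a $t$-independent constant $C$.

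To close, I would use the Poincar\'e-type inequality $\int_\Omega|\nabla\varphi|^2=\int_\Omega(u-\bar u)\varphi\le C\|u-\bar u\|_{L^2}^2$, i.e.\ $y\le C\int_\Omega(u-\bar u)^2$, to split the dissipation and reach $y'(t)+\delta y(t)+\tfrac{\gamma_*}{4}\int_\Omega(u-\bar u)^2\le C$ for some $\delta>0$. From $y'+\delta y\le C$ (right side constant) one gets $y(t)\le y(0)+C/\delta=:M$ for all $t\in[0,T_{max})$ --- directly, or via Lemma~\ref{lm16}. Integrating $y'(t)+\tfrac{\gamma_*}{4}\int_\Omega(u-\bar u)^2\le C$ over $(t,t+\tau)$ with $\tau=\min\{1,\tfrac12T_{max}\}$ then yields $\int_t^{t+\tau}\int_\Omega(u-\bar u)^2\le\tfrac{4}{\gamma_*}\big(C\tau+y(t)\big)\le C$, and adding $\int_t^{t+\tau}|\Omega|\bar u^2\le C$ finishes the proof.

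The step I expect to be the main obstacle is the reaction term $\alpha uF(w)$: a naive estimate $\int_\Omega uF(w)\varphi\le C\|u\|_{L^2}\|\varphi\|_{L^2}\le C\|u\|_{L^2}^2$ produces a quadratic term whose coefficient is not small and which therefore cannot be absorbed into the dissipation $\int_\Omega\gamma(v)u^2$. The way around it is to pair the smoothing of $A_0^{-1}$ (so that $\varphi\in L^\infty$) with the $L^1$ mass bound rather than with the $L^2$ norm of $u$, as above. I would also stress that the argument rests entirely on the two-sided bound for $\gamma(v)$ established in Section~\ref{sec3}: if that bound is only $T_{max}$-dependent (Lemma~\ref{lm1}), so is $C$ here, whereas under the additional hypothesis \eqref{23} everything --- and hence $C$ --- is independent of $T_{max}$.
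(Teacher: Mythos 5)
Your proof is correct, but it follows a genuinely different route from the paper. The paper exploits the system's algebraic structure: it multiplies the third equation of \eqref{1} by $\alpha$ and adds it to the first, so that the reaction terms $\pm\alpha uF(w)$ cancel exactly, and then runs the $A_0^{-1/2}$-duality argument on the combined quantity $u+\alpha w-\overline{u}-\alpha\overline{w}$. The price is a handful of extra cross terms coupling $u$ and $w$, but these are harmless because $\overline{u}$ is bounded (Lemma~\ref{lm12}) and $w$ is uniformly bounded (via $\|w_0\|_{L^\infty}$). Your approach instead works with $u-\overline{u}$ alone, keeps $\alpha uF(w)$ as a source, and controls it by pairing the $L^1$ mass bound on $u$ with the elliptic smoothing $A_0^{-1}: L^2_\perp\to W^{2,2}\hookrightarrow L^\infty$, which holds precisely because $n\le 2$ (in fact $n\le 3$). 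Both routes land on the same differential inequality $y'+\delta y+\tfrac{\gamma_*}{4}\int_\Omega(u-\overline{u})^2\le C$. The paper's cancellation trick is dimension-free at that step and so is arguably more robust (it never invokes $W^{2,2}\hookrightarrow L^\infty$), whereas yours avoids having to carry $w$ through the testing argument and is closer to the standard duality computation for a single Keller--Segel equation. One small bookkeeping point worth making explicit: the cross term $\overline{u}\int_\Omega\gamma(v)(u-\overline{u})$ on the left side is bounded via $|\,\cdot\,|\le\overline{u}\,\gamma^*|\Omega|^{1/2}\|u-\overline{u}\|_{L^2}$ before applying Young, but that is what you intended.
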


\begin{proof}
According to Lemma \ref{lm2}, Lemma \ref{lm1} and \eqref{4}, we can find constants $C_1,C_2>0$ such that
$$C_1\leq \gamma(v)\leq C_2.$$
Multiplying the third equation of \eqref{1} by $\alpha$ and adding the result equation to the first equation of \eqref{1}, we get
$$(u+\alpha w)_t=\Delta(\gamma(v)u+\alpha w)$$
which can be rewritten as
\begin{equation}\label{11}
(u+\alpha w-\overline{u}-\alpha \overline{w})_t=-A_0(\gamma(v)u+\alpha w-\overline{\gamma(v)u}-\alpha \overline{w}).
\end{equation}
In view of \eqref{11} and the fact $\int_\Omega(u+\alpha w-\overline{u}-\alpha \overline{w})=0$, integrating by parts, we obtain
\begin{align*}
&\frac{1}{2}\frac{d}{dt}\int_\Omega|A_0^{-\frac{1}{2}}(u+\alpha w-\overline{u}-\alpha \overline{w})|^2\\
=&\int_\Omega A_0^{-\frac{1}{2}}(u+\alpha w-\overline{u}-\alpha \overline{w})\cdot A_0^{-\frac{1}{2}}(u+\alpha w-\overline{u}-\alpha \overline{w})_t\\
=&\int_\Omega A_0^{-1}(u+\alpha w-\overline{u}-\alpha \overline{w})\cdot(u+\alpha w-\overline{u}-\alpha \overline{w})_t\\
=&-\int_\Omega A_0^{-1}(u+\alpha w-\overline{u}-\alpha \overline{w})\cdot A_0(\gamma(v)u+\alpha w-\overline{\gamma(v)u}-\alpha \overline{w})\\
=&-\int_\Omega(u+\alpha w-\overline{u}-\alpha \overline{w})\cdot (\gamma(v)u+\alpha w-\overline{\gamma(v)u}-\alpha \overline{w})\\
=&-\int_\Omega\gamma(v)(u-\overline{u})^2-\overline{u}\int_\Omega\gamma(v)(u-\overline{u})-\alpha\int_\Omega(1+\gamma(v))(u-\overline{u})(w-\overline{w})\\
&\quad-\alpha\overline{u}\int_\Omega\gamma(v)(w-\overline{w})-\alpha^2\int_\Omega(w-\overline{w})^2\\
\leq&-C_1\int_\Omega(u-\overline{u})^2+C_2|\Omega|\overline{u}^2+2\alpha(1+C_2)|\Omega|\overline{u}\cdot\overline{w}+C_2|\Omega|\alpha\overline{u}\cdot\overline{w}-\alpha^2\int_\Omega(w-\overline{w})^2
\end{align*}
which yields a constant $C_3>0$ such that
\begin{equation}\label{29}
\begin{aligned}
\frac{d}{dt}\int_\Omega|A_0^{-\frac{1}{2}}(u+\alpha w-\overline{u}-\alpha \overline{w})|^2+2C_1\int_\Omega(u-\overline{u})^2+2\alpha^2\int_\Omega(w-\overline{w})^2\leq C_3.
\end{aligned}
\end{equation}
By the Poincar\'{e} inequality and the fact
$$\int_{\Omega}A_0^{-\frac{1}{2}}\left(u+\alpha w-\overline{u}-\alpha \overline{w}\right)=0,$$
we can find a constant $C_4>0$ such that
\begin{align*}
&\int_{\Omega}\left|A_0^{-\frac{1}{2}}(u+\alpha w-\overline{u}-\alpha \overline{w})\right|^{2} \\
\leq& C_4 \int_{\Omega}\left|\nabla A_0^{-\frac{1}{2}}(u+\alpha w-\overline{u}-\alpha \overline{w})\right|^{2} \\
=&C_4 \int_{\Omega}|u+\alpha w-\overline{u}-\alpha \overline{w}|^{2} \\
\leq& 2C_4\int_{\Omega}(u-\overline{u})^{2}+2C_4 \alpha^{2} \int_{\Omega}(w-\overline{w})^{2} \\
\leq& 2C_4\int_{\Omega}(u-\overline{u})^{2}+2C_4 \alpha^{2}|\Omega|\left\|w_{0}\right\|_{L^{\infty}}^{2}
\end{align*}
which combined with \eqref{29} implies there exists a constant $C_5>0$ such that
\begin{equation}\label{30}
\begin{aligned}
\frac{d}{dt}\int_\Omega|A_0^{-\frac{1}{2}}(u+\alpha w-\overline{u}-\alpha \overline{w})|^2&+\frac{C_1}{2C_4}\int_{\Omega}\left|A_0^{-\frac{1}{2}}(u+\alpha w-\overline{u}-\alpha \overline{w})\right|^{2}\\
&+C_1\int_\Omega(u-\overline{u})^2+2\alpha^2\int_\Omega(w-\overline{w})^2\leq C_5.
\end{aligned}
\end{equation}
An application of Gr\"{o}nwall's inequality gives a constant $C_6>0$ such that
$$\int_\Omega|A_0^{-\frac{1}{2}}(u+\alpha w-\overline{u}-\alpha \overline{w})|^2\leq C_6.$$
Integrating \eqref{30} over $(t,t+\tau)$, we get
$$\int_t^{t+\tau}\int_\Omega(u-\overline{u})^2\leq C_7$$
for some constant $C_7>0$, which implies
\begin{equation*}
\int_t^{t+\tau}\int_\Omega u^2=\int_t^{t+\tau}\int_\Omega(u-\overline{u})^2+\int_t^{t+\tau}\int_\Omega \overline{u}^2\leq C_7+\overline{u}^2|\Omega|.
\end{equation*}
Hence, we complete the proof of the lemma.
\end{proof}

\subsubsection{\texorpdfstring{$L^2$}{L2}-estimate of \texorpdfstring{$u$}{u}}


\begin{lemma}\label{lm6}
Let $(u,v,w)$ be a solution of \eqref{1}. Then there exists a constant $C>0$ such that
$$\int_{\Omega}|\nabla v|^{2}\leq C\ \ \text{for all}\ \ t\in(0,T_{max})$$
and
$$\int_t^{t+\tau}\int_{\Omega}|\Delta v|^2\leq C\ \ \text{for all}\ \ t\in[0,T_{max}-\tau).$$
\end{lemma}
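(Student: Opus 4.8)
The plan is to test the second equation of \eqref{1} against $-\Delta v$ and turn the resulting identity into a differential inequality that can be closed using the space-time $L^2$-bound on $u$ from Lemma \ref{lm4}. Concretely, multiplying $v_t = D\Delta v + u - v$ by $-\Delta v$ and integrating by parts over $\Omega$ (the Neumann condition kills the boundary term), one obtains
\[
\frac{1}{2}\frac{d}{dt}\int_\Omega |\nabla v|^2 + D\int_\Omega |\Delta v|^2 + \int_\Omega |\nabla v|^2 = -\int_\Omega u\,\Delta v.
\]
By Young's inequality the right-hand side is bounded by $\frac{D}{2}\int_\Omega |\Delta v|^2 + \frac{1}{2D}\int_\Omega u^2$, so that
\[
\frac{d}{dt}\int_\Omega |\nabla v|^2 + 2\int_\Omega |\nabla v|^2 + D\int_\Omega |\Delta v|^2 \le \frac{1}{D}\int_\Omega u^2.
\]

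Now I would feed this into the ODE lemma (Lemma \ref{lm16}): set $y(t) = \int_\Omega |\nabla v|^2$, $a = 2$, and $h(t) = \frac{1}{D}\int_\Omega u^2$. The hypothesis $\int_t^{t+\tau} h(s)\,ds \le b$ is exactly the content of Lemma \ref{lm4} (with $\tau = \min\{1,\tfrac12 T_{max}\}$ and $b$ a constant multiple of the bound there), and since $v_0 \in W^{1,\infty}(\Omega)$ the initial value $y(0)$ is finite. Lemma \ref{lm16} then yields $\int_\Omega |\nabla v|^2 \le C$ uniformly on $(0,T_{max})$, which is the first assertion. For the second assertion, I would integrate the differential inequality over the interval $(t,t+\tau)$: the time-integral of $\frac{d}{dt}\int_\Omega|\nabla v|^2$ telescopes to $\int_\Omega|\nabla v(\cdot,t+\tau)|^2 - \int_\Omega|\nabla v(\cdot,t)|^2$, which is bounded in absolute value by $2C$ thanks to the first part, while $\int_t^{t+\tau}\frac{1}{D}\int_\Omega u^2$ is bounded by Lemma \ref{lm4}; hence $D\int_t^{t+\tau}\int_\Omega|\Delta v|^2 \le 2C + C'$, giving the claimed space-time bound on $\Delta v$.

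There is essentially no serious obstacle here: the only point requiring a little care is making sure the initial-time regularity is adequate to justify differentiating $\int_\Omega|\nabla v|^2$ and to control $y(0)$ — but this is immediate from $v_0\in W^{1,\infty}(\Omega)$ and the parabolic smoothing in Lemma \ref{lm3}. All other estimates are standard integration by parts plus Young's inequality, and the heavy lifting (the uniform lower and upper bounds on $v$, which guarantee no degeneracy enters, and the space-time $L^2$-control of $u$) has already been done in the earlier lemmas. If one prefers to avoid even the mild regularity subtlety at $t=0$, one can instead first derive the estimate on $(\tau, T_{max})$ via the uniform Gr\"onwall inequality (Lemma \ref{lm9}) using the time-integrated bound on $\int_\Omega|\nabla v|^2$ obtained en route, and then handle $[0,\tau]$ separately by continuity.
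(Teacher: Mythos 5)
Your proposal is correct and follows essentially the same route as the paper: multiply the $v$-equation by $-\Delta v$ (equivalently, differentiate $\tfrac12\int_\Omega|\nabla v|^2$ and substitute $v_t$), apply Young's inequality to arrive at the differential inequality $\frac{d}{dt}\int_\Omega|\nabla v|^2 + 2\int_\Omega|\nabla v|^2 + D\int_\Omega|\Delta v|^2 \le \frac{1}{D}\int_\Omega u^2$, and then close via Lemma \ref{lm16} together with the space-time $L^2$-bound on $u$ from Lemma \ref{lm4}. Your explicit integration over $(t,t+\tau)$ for the $\Delta v$ estimate is exactly what the paper does implicitly, so there is no substantive difference.
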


\begin{proof}
By simple computations, we have
\begin{align*}
\frac{1}{2}\frac{d}{dt}\int_{\Omega}|\nabla v|^{2}=&\int_{\Omega}\nabla v\cdot\nabla v_t\\
=&\int_{\Omega}\nabla v\cdot\nabla(D\Delta v-v+u)\\
=&-D\int_{\Omega}|\Delta v|^2-\int_{\Omega}|\nabla v|^2-\int_{\Omega}u\Delta v\\
\leq&-\frac{D}{2}\int_{\Omega}|\Delta v|^2-\int_{\Omega}|\nabla v|^2+\frac{1}{2D}\int_{\Omega}u^2
\end{align*}
which leads to
$$\frac{d}{dt}\int_{\Omega}|\nabla v|^{2}+D\int_{\Omega}|\Delta v|^2+2\int_{\Omega}|\nabla v|^2\leq\frac{1}{D}\int_{\Omega}u^2.$$
An application of the Gr\"{o}nwall inequality along with Lemma \ref{lm16} and Lemma \ref{lm4} gives a constant $C_1>0$ such that
$$\int_{\Omega}|\nabla v|^{2}+\int_t^{t+\tau}\int_{\Omega}|\Delta v|^2\leq C_1.$$
Therefore, we finish the proof of this lemma.
\end{proof}

\begin{lemma}\label{lm5}
Let $(u,v,w)$ be a solution of \eqref{1}. There exist constants $c,C>0$ such that for any $p\geq2$, we have
$$\begin{aligned}
&\frac{d}{dt}\int_\Omega u^p+cp(p-1)\int_\Omega u^{p-2}|\nabla u|^2\\
\leq&Cp(p-1)\int_\Omega u^p|\nabla v|^2+Cp(p-1)\int_\Omega u^p\ \ \text{for all}\ \ t\in(0,T_{max}).
\end{aligned}$$

\end{lemma}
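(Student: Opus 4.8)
\textbf{Proof proposal for Lemma \ref{lm5}.}
The plan is to test the first equation of \eqref{1} against $p u^{p-1}$ and integrate over $\Omega$. Using the divergence form $u_t = \nabla\cdot(\gamma(v)\nabla u + u\gamma'(v)\nabla v) + \alpha u F(w)$ and integrating by parts with the homogeneous Neumann condition, the diffusion term produces
$$
-p(p-1)\int_\Omega u^{p-2}\bigl(\gamma(v)|\nabla u|^2 + u\gamma'(v)\nabla v\cdot\nabla u\bigr),
$$
while the reaction term contributes $\alpha p\int_\Omega u^p F(w)$. First I would invoke the positive lower and upper bounds of $v$ established in Lemmas \ref{lm2} and \ref{lm1} (and the uniform boundedness version under \eqref{23}) together with \eqref{4} to fix constants $0 < \gamma_1 \le \gamma(v) \le \gamma_2$ and $|\gamma'(v)| \le \eta$ on the relevant range of $v$; this is precisely where the degeneracy/singularity removal from the previous subsection is used, so that $\gamma$ behaves like a non-degenerate coefficient.

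Next I would handle the cross term $-p(p-1)\int_\Omega u^{p-1}\gamma'(v)\nabla v\cdot\nabla u$ by Young's inequality: bound it by $\tfrac{p(p-1)}{2}\int_\Omega \gamma(v) u^{p-2}|\nabla u|^2 + \tfrac{p(p-1)}{2}\int_\Omega \tfrac{|\gamma'(v)|^2}{\gamma(v)} u^p|\nabla v|^2$. The first piece is absorbed into the good dissipation term $p(p-1)\int_\Omega \gamma(v)u^{p-2}|\nabla u|^2$, leaving half of it, which is $\ge \tfrac{\gamma_1}{2} p(p-1)\int_\Omega u^{p-2}|\nabla u|^2$; this gives the constant $c$ in the statement. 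The second piece is controlled by $\tfrac{\eta^2}{2\gamma_1} p(p-1)\int_\Omega u^p|\nabla v|^2$, which is exactly the first term on the right-hand side. Finally the reaction term is bounded using Lemma \ref{lm7} and the continuity of $F$ by $\alpha p \|F\|_{L^\infty([0,\|w\|_\infty])} \int_\Omega u^p \le C p(p-1)\int_\Omega u^p$ (absorbing $\alpha p$ into $Cp(p-1)$ since $p\ge 2$). Collecting these gives the claimed differential inequality.

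There is no serious obstacle here: this is a standard $L^p$ testing estimate. The one point requiring a little care is keeping the $p$-dependence of the constants in the form $p(p-1)$ rather than merely $p$ or $p^2$ — this is needed so that the subsequent Moser-type iteration closes — which is why the reaction term must be written as $Cp(p-1)\int_\Omega u^p$ (legitimate for $p\ge2$) and why Young's inequality is applied with coefficient $\tfrac12$ against the diffusion term so that the factor $p(p-1)$ is matched exactly on both sides. The only genuine input from the rest of the paper is the two-sided bound on $v$, without which the coefficient $\gamma(v)$ could vanish and the dissipation term would be useless; everything else is elementary integration by parts and Young's inequality.
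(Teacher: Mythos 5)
Your proposal is correct and follows essentially the same route as the paper: test the first equation against $u^{p-1}$, expand $\Delta(\gamma(v)u)$ in divergence form, integrate by parts, absorb the cross term by Young's inequality into the dissipation, invoke the two-sided bounds on $v$ from Lemmas \ref{lm2} and \ref{lm1} (together with the smoothness of $\gamma$) to bound $\gamma(v)$ from below and $|\gamma'(v)|^2/\gamma(v)$ from above, and use $p\le p(p-1)$ for $p\ge2$ to cast the reaction contribution in the required $Cp(p-1)$ form. The only cosmetic difference is that you introduce separate bounds $\gamma\ge\gamma_1$ and $|\gamma'|\le\eta$, whereas the paper bounds the ratio $|\gamma'(v)|^2/\gamma(v)$ directly; the substance is identical.
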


\begin{proof}
According to Lemma \ref{lm3}, Lemma \ref{lm7} and the hypothesis on $F$, we can find a constant $C_1>0$ such that
$$F(w)\leq C_1.$$
Noticing Lemma \ref{lm2}, Lemma \ref{lm1} and the smoothing property of $\gamma$, there exist constants $C_2, C_3>0$ such that
\begin{equation}\label{2}
\gamma(v)\geq C_2
\end{equation}
and
\begin{equation}\label{3}
\frac{|\gamma'(v)|^2}{\gamma(v)}\leq C_3.
\end{equation}
Using $u^{p-1}$ with $p\geq2$ as a test function for the first equation in \eqref{1}, integrating the resulting equation by parts and using Young's inequality, we obtain
\begin{equation*}
\begin{aligned}
\frac{1}{p}\frac{d}{dt}\int_\Omega u^p=&\int_\Omega u^{p-1}\Delta(\gamma(v)u)+\alpha\int_\Omega u^pF(w)\\
\leq&-(p-1)\int_\Omega\gamma(v) u^{p-2}|\nabla u|^2+(p-1)\int_\Omega\gamma'(v)u^{p-1}\nabla u\cdot\nabla v+C_1\alpha\int_\Omega u^p\\
\leq&-\frac{p-1}{2}\int_\Omega \gamma(v)u^{p-2}|\nabla u|^2+\frac{p-1}{2}\int_\Omega \frac{|\gamma'(v)|^2}{\gamma(v)}u^p|\nabla v|^2+C_1\alpha\int_\Omega u^p
\end{aligned}
\end{equation*}
which, combined with \eqref{2} and \eqref{3}, yields that
\begin{equation*}
\begin{aligned}
&\frac{d}{dt}\int_\Omega u^p+\frac{p(p-1)}{2}C_2\int_\Omega u^{p-2}|\nabla u|^2\\
\leq&\frac{p(p-1)}{2}C_3\int_\Omega u^p|\nabla v|^2+C_1\alpha p\int_\Omega u^p.
\end{aligned}
\end{equation*}
This finishes the proof of this lemma.
\end{proof}

Now the uniform-time boundedness of $u$ in $L^2(\Omega)$ can be established.

\begin{lemma}\label{lm13}
Let $(u,v,w)$ be a solution of \eqref{1}. Then there is a constant $C>0$ such that
$$\int_\Omega u^2\leq C\ \ \text{for all}\ \ t\in(0,T_{max}).$$
\end{lemma}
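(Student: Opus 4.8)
The plan is to derive the $L^2(\Omega)$-bound of $u$ by combining the differential inequality of Lemma \ref{lm5} (with $p=2$) with the gradient estimate $\int_\Omega|\nabla v|^2\le C_1$ from Lemma \ref{lm6}, and then absorbing the troublesome cross term $\int_\Omega u^2|\nabla v|^2$ via a Gagliardo--Nirenberg interpolation. Specifically, taking $p=2$ in Lemma \ref{lm5} gives
\begin{equation*}
\frac{d}{dt}\int_\Omega u^2+2c\int_\Omega|\nabla u|^2\le 2C\int_\Omega u^2|\nabla v|^2+2C\int_\Omega u^2 .
\end{equation*}
The first obstacle --- and the main one --- is to control $\int_\Omega u^2|\nabla v|^2$. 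Here I would use the fact that in dimension $n\le2$ the space $W^{1,2}(\Omega)$ embeds into every $L^q(\Omega)$, so that $\|\nabla v\|_{L^4}$ can be bounded once we have an $H^2$-type estimate on $v$; indeed $\int_\Omega|\nabla v|^2$ and $\int_t^{t+\tau}\int_\Omega|\Delta v|^2$ are already controlled by Lemma \ref{lm6}, and by elliptic regularity for the Neumann Laplacian $\|\nabla v\|_{W^{1,2}}^2\le C(\|\Delta v\|_{L^2}^2+\|\nabla v\|_{L^2}^2)$, hence $\|\nabla v\|_{L^4}^4$ is integrable in time over $(t,t+\tau)$. Then by Hölder's inequality $\int_\Omega u^2|\nabla v|^2\le\|u\|_{L^4}^2\|\nabla v\|_{L^4}^2$, and Gagliardo--Nirenberg in $n\le2$ gives $\|u\|_{L^4}^2\le C\|\nabla u\|_{L^2}\|u\|_{L^2}+C\|u\|_{L^2}^2$ (using the uniform $L^1$-bound from Lemma \ref{lm12} one can even interpolate against $L^1$). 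Feeding this in and using Young's inequality, the $\|\nabla u\|_{L^2}^2$ piece is absorbed into the dissipation $2c\int_\Omega|\nabla u|^2$ on the left, at the cost of a term of the form $C\|\nabla v\|_{L^4}^4\int_\Omega u^2+C\int_\Omega u^2$ on the right.

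After this absorption one arrives at a differential inequality of the shape
\begin{equation*}
y'(t)+c\int_\Omega|\nabla u|^2\le a(t)\,y(t)+b(t),\qquad y(t):=\int_\Omega u^2,
\end{equation*}
where $a(t)=C(\|\nabla v(\cdot,t)\|_{L^4}^4+1)$ and $b(t)$ is likewise controlled; the space-time $L^2$-bound of Lemma \ref{lm4} gives $\int_t^{t+\tau}y\le C$, Lemma \ref{lm6} together with the elliptic estimate gives $\int_t^{t+\tau}a\le C$, and $\int_t^{t+\tau}b\le C$. At this point I would invoke the uniform Grönwall inequality Lemma \ref{lm9} (with the locally-integrable data $a,b,y$) to conclude $y(t)=\int_\Omega u^2\le C$ for all $t\in[\tau,T_{max})$, and then handle the initial slice $t\in(0,\tau]$ separately by a standard local estimate starting from the regularity of $u_0$ in Lemma \ref{lm3} (for instance by using Lemma \ref{lm16} in place of Lemma \ref{lm9} so as to cover $t\in(0,T_{max})$ uniformly). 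This yields the asserted uniform-in-time bound $\int_\Omega u^2\le C$.

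The step I expect to be genuinely delicate is the quantitative bookkeeping in the Gagliardo--Nirenberg/Young step: one must make sure that the exponent on $\|\nabla u\|_{L^2}$ after interpolation is strictly less than $2$ (which is exactly where $n\le2$ is used), so that Young's inequality produces a coefficient that can be absorbed uniformly in $t$ rather than one that blows up; and one must verify that the leftover coefficient multiplying $\int_\Omega u^2$ is indeed of the form $\|\nabla v\|_{L^4}^4$ (or a similar time-integrable quantity) rather than a pointwise-in-time unbounded factor. Everything else --- the differential inequality, the elliptic estimate for $v$, and the application of Lemma \ref{lm9} --- is routine once this interpolation is set up correctly.
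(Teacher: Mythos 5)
Your overall strategy matches the paper's almost exactly: take $p=2$ in Lemma~\ref{lm5}, bound the cross term via H\"older plus Gagliardo--Nirenberg, absorb $\|\nabla u\|_{L^2}^2$ into the dissipation with Young, and close with the uniform Gr\"onwall inequality Lemma~\ref{lm9} using the time-integrability coming from Lemma~\ref{lm6}. However, there is a genuine gap precisely at the bookkeeping step you flagged. You claim that the Sobolev embedding $W^{1,2}\hookrightarrow L^4$ together with the elliptic estimate $\|\nabla v\|_{W^{1,2}}^2\le C(\|\Delta v\|_{L^2}^2+\|\nabla v\|_{L^2}^2)$ makes $\|\nabla v\|_{L^4}^4$ integrable in time. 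That does not follow: the embedding only gives
$\|\nabla v\|_{L^4}^2\lesssim\|\nabla v\|_{W^{1,2}}^2\lesssim\|\Delta v\|_{L^2}^2+1,$
so $\|\nabla v\|_{L^4}^4\lesssim\|\Delta v\|_{L^2}^4+1$, and Lemma~\ref{lm6} controls only $\int_t^{t+\tau}\|\Delta v\|_{L^2}^2$, not the fourth power. Since $a(t)=C(\|\nabla v\|_{L^4}^4+1)$ is exactly the coefficient you feed into Lemma~\ref{lm9}, the argument as written does not close.

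The fix is to replace the Sobolev embedding by the Gagliardo--Nirenberg (Ladyzhenskaya) interpolation for $\nabla v$ itself, which is what the paper does: in $n\le 2$,
$\|\nabla v\|_{L^4}\lesssim\|\Delta v\|_{L^2}^{n/4}\|\nabla v\|_{L^2}^{(4-n)/4}+\|\nabla v\|_{L^2},$
and then the pointwise-in-time bound $\|\nabla v\|_{L^2}\le C$ from Lemma~\ref{lm6} yields $\|\nabla v\|_{L^4}^2\lesssim\|\Delta v\|_{L^2}^{n/2}+1$, hence $\|\nabla v\|_{L^4}^4\lesssim\|\Delta v\|_{L^2}^n+1\le\|\Delta v\|_{L^2}^2+1$, which \emph{is} time-integrable. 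With this correction your coefficient $a(t)$ is of the form $C(1+\|\Delta v\|_{L^2}^2)$, exactly as in the paper, and Lemma~\ref{lm9} applies. One further small slip: your suggested fallback Lemma~\ref{lm16} requires a \emph{constant} damping coefficient $a$, so it does not apply to $y'\le a(t)y+b(t)$; but the interval $(0,\tau]$ is handled simply by the continuity of $u$ on $\overline{\Omega}\times[0,\tau]$ guaranteed by Lemma~\ref{lm3}.
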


\begin{proof}
Taking $p=2$ in Lemma \ref{lm5}, we get the following estimate
\begin{equation}\label{14}
\frac{d}{dt}\int_\Omega u^2+C_1\int_\Omega |\nabla u|^2\leq C_2\int_\Omega u^2|\nabla v|^2+C_2\int_\Omega u^2
\end{equation}
for some constants $C_1,C_2>0$. Using Lemma \ref{lm6}, the Gagliardo-Nirenberg inequality and Young's inequality, we can find constants $C_3,C_4,C_5>0$ such that
\begin{equation*}
\begin{aligned}
C_2\int_\Omega u^2|\nabla v|^2\leq&C_2\|u\|_{L^4}^2\|\nabla v\|_{L^4}^2\\
\leq&C_3\left(\|\nabla u\|_{L^2}^{\frac{n}{4}}\|u\|_{L^2}^{\frac{4-n}{4}}+\|u\|_{L^2}\right)^2\left(\|\Delta v\|_{L^2}^{\frac{n}{4}}\|\nabla v\|_{L^2}^{\frac{4-n}{4}}+\|\nabla v\|_{L^2}\right)^2\\
\leq&4C_3\left(\|\nabla u\|_{L^2}^{\frac{n}{2}}\|u\|_{L^2}^{\frac{4-n}{2}}\|\Delta v\|_{L^2}^{\frac{n}{2}}\|\nabla v\|_{L^2}^{\frac{4-n}{2}}+\|\nabla u\|_{L^2}^{\frac{n}{2}}\|u\|_{L^2}^{\frac{4-n}{2}}\|\nabla v\|_{L^2}^2\right.\\
&\quad\left.+\|u\|_{L^2}^2\|\Delta v\|_{L^2}^{\frac{n}{2}}\|\nabla v\|_{L^2}^{\frac{4-n}{2}}+\|u\|_{L^2}^2\|\nabla v\|_{L^2}^2\right)\\
\leq&C_4\left(\|\nabla u\|_{L^2}^{\frac{n}{2}}\|u\|_{L^2}^{\frac{4-n}{2}}\|\Delta v\|_{L^2}^{\frac{n}{2}}+\|\nabla u\|_{L^2}^{\frac{n}{2}}\|u\|_{L^2}^{\frac{4-n}{2}}\right.\\
&\quad\left.+\|u\|_{L^2}^2\|\Delta v\|_{L^2}^{\frac{n}{2}}+\|u\|_{L^2}^2\right)\\
\leq&\frac{C_1}{2}\|\nabla u\|_{L^2}^2+C_5\left(1+\|\Delta v\|_{L^2}^{\frac{2n}{4-n}}+\|\Delta v\|_{L^2}^{\frac{n}{2}}\right)\|u\|_{L^2}^2\\
\leq&\frac{C_1}{2}\|\nabla u\|_{L^2}^2+C_5\left(1+\|\Delta v\|_{L^2}^2\right)\|u\|_{L^2}^2
\end{aligned}
\end{equation*}
which combined with \eqref{14} implies there exists a constant $C_6>0$ such that
$$\frac{d}{dt}\int_\Omega u^2+\frac{C_1}{2}\int_\Omega |\nabla u|^2\leq C_6\left(1+\|\Delta v\|_{L^2}^2\right)\int_\Omega u^2.$$
An application of Lemma \ref{lm6} and Lemma \ref{lm9} gives the desired result.
\end{proof}

\subsubsection{\texorpdfstring{$L^\infty$}{Lp}-estimate of \texorpdfstring{$u$}{u}}

\begin{lemma}\label{lm11}
Let $(u,v,w)$ be a solution of \eqref{1}. For any $p\geq1$, there exists $C>0$ such that
$$\int_\Omega|\nabla v|^p\leq C\ \ \text{for all}\ \ t\in(0,T_{max}).$$
Moreover, if $n=1$, then we can find $C>0$ such that
$$\|\nabla v\|_{L^\infty}\leq C\ \ \text{for all}\ \ t\in(0,T_{max}).$$
\end{lemma}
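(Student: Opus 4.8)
The plan is to obtain $L^p$ bounds for $\nabla v$ from the second equation of \eqref{1}, using the $L^\infty$ bound of $v$ already established and the space-time $L^2$ control of $u$, and then to bootstrap to higher integrability. First I would test the equation $v_t = D\Delta v + u - v$ against $-\nabla\cdot(|\nabla v|^{p-2}\nabla v)$, or equivalently differentiate $\int_\Omega |\nabla v|^p$ in time, to get an identity of the schematic form
\[
\frac{1}{p}\frac{d}{dt}\int_\Omega |\nabla v|^p + c\int_\Omega |\nabla v|^{p-2}\big|D^2 v\big|^2 + \int_\Omega |\nabla v|^p \le C\int_\Omega u\,|\nabla v|^{p-2}\,|D^2 v| + (\text{boundary terms}),
\]
where the boundary integral on $\partial\Omega$ coming from integration by parts is controlled using the smoothness of $\partial\Omega$ together with the pointwise bound on $v$ (this is the standard device, e.g. as in the $L^p$ gradient estimates for the chemical equation in chemotaxis theory). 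Absorbing the $D^2 v$ term on the right into the dissipation via Young's inequality leaves a right-hand side of the form $C\int_\Omega u^2 |\nabla v|^{p-2}$.

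Next I would treat $\int_\Omega u^2 |\nabla v|^{p-2}$ by Hölder's inequality and the Gagliardo--Nirenberg inequality. In dimension $n \le 2$, writing this as $\|u\|_{L^2}^2$ times a power of $\|\nabla v\|_{L^p}$ (interpolated between $\nabla v \in L^2$, available from Lemma \ref{lm6}, and the second-order term), one obtains an absorbable structure provided one already controls $\int_\Omega u^2$; but that is exactly Lemma \ref{lm13}. So the argument runs: with $\|u(\cdot,t)\|_{L^2}\le C$ uniformly in time, the differential inequality for $y(t) := \int_\Omega |\nabla v|^p$ closes into the form $y' + y + (\text{dissipation}) \le a(t)y + b(t)$ with $\int_t^{t+\tau} a$, $\int_t^{t+\tau} b$, $\int_t^{t+\tau} y$ all bounded, the last again via Lemma \ref{lm6} and Lemma \ref{lm4}. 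Then Lemma \ref{lm9} (the uniform Grönwall inequality) delivers the uniform-in-time bound $\int_\Omega |\nabla v|^p \le C$ for the given $p$. Since $n \le 2$, one can iterate this (or simply run it once with $p$ as large as desired) to reach every finite $p \ge 1$.

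For the one-dimensional sharpening to $\|\nabla v\|_{L^\infty}\le C$, the cleanest route is to appeal to parabolic smoothing: rewrite the $v$-equation as $v_t = D\Delta v - v + u$ and use semigroup estimates for $e^{tD\Delta}$ together with $u \in L^\infty((0,T_{max}); L^p(\Omega))$ for some $p > n = 1$, which follows from Lemma \ref{lm13}. The standard $L^p$--$W^{1,q}$ smoothing estimate for the Neumann heat semigroup (the analogue of Lemma \ref{lm10}) then gives $\nabla v \in L^\infty((0,T_{max}); L^\infty(\Omega))$ when $n=1$ since any $p>1$ already exceeds the dimension, and the constant is time-independent because the data $\|u\|_{L^p}$ and $\|\nabla v_0\|_{L^\infty}$ are. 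Alternatively one pushes the $L^p$ estimate above to $p$ arbitrarily large and invokes a Sobolev embedding in one dimension, but the semigroup argument is shorter.

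The main obstacle I anticipate is the boundary term produced when integrating $\int_\Omega \nabla v \cdot \nabla \Delta v$ (and its $|\nabla v|^{p-2}$-weighted analogue) by parts: on a general smooth bounded domain this is not zero and must be estimated, typically by $\int_{\partial\Omega} |\nabla v|^p \le \varepsilon \int_\Omega |\nabla v|^{p-2}|D^2 v|^2 + C_\varepsilon \int_\Omega |\nabla v|^p$ using the trace inequality and the curvature bound on $\partial\Omega$. Handling this correctly, and making sure the subsequent Gagliardo--Nirenberg interpolation exponents are subcritical precisely because $n \le 2$, is where the care is needed; everything else is the now-routine combination of Lemma \ref{lm13}, Lemma \ref{lm6}, Young's inequality, and the uniform Grönwall lemma.
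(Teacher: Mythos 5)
Your plan is far more elaborate than what the paper actually does, and the extra machinery introduces a gap that the paper avoids entirely. The paper's proof is a one-liner: since Lemma~\ref{lm13} gives $u\in L^\infty\bigl((0,T_{max});L^2(\Omega)\bigr)$, apply the parabolic $L^p$--$W^{1,q}$ regularity estimate of Lemma~\ref{lm10} to the $v$-equation with $\phi=u$ and $p=2$. Because $n\le 2$, the case $n=2$ has $p=n$, hence $q\in[1,\tfrac{np}{n-p})=[1,\infty)$, giving the uniform $L^q$-bound on $\nabla v$ for every finite $q$; and the case $n=1$ has $p>n$, hence $q\in[1,\infty]$, giving $\|\nabla v\|_{L^\infty}\le C$. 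Both assertions of the lemma thus follow simultaneously from the same semigroup estimate. You do recognize exactly this route in your third paragraph (``appeal to parabolic smoothing \ldots the analogue of Lemma~\ref{lm10}''), but you apply it only to the $n=1$ refinement, not noticing that it already delivers the finite-$p$ statement as well.

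The energy argument you propose for the finite-$p$ bound has a concrete difficulty that you gloss over: after absorbing the Hessian terms you are left with $\int_\Omega u^2|\nabla v|^{p-2}$, and at this stage of the paper the only pointwise-in-time control on $u$ is $\|u(\cdot,t)\|_{L^2}\le C$ from Lemma~\ref{lm13}. Any H\"older split $\int_\Omega u^2|\nabla v|^{p-2}\le\|u\|_{L^{2r}}^2\,\|\nabla v\|_{L^{(p-2)r'}}^{p-2}$ forces $r=1$ (hence $r'=\infty$) if you insist on using only $\|u\|_{L^2}$, which leaves $\|\nabla v\|_{L^\infty}^{p-2}$; in two dimensions the dissipation $\int|\nabla v|^{p-2}|D^2v|^2\sim\|\nabla|\nabla v|^{p/2}\|_{L^2}^2$ places $|\nabla v|^{p/2}$ in $W^{1,2}$, which embeds into every $L^q$ with $q<\infty$ but \emph{not} into $L^\infty$ --- this is exactly the borderline Sobolev case. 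Any attempt to soften the H\"older exponents requires $u\in L^r$ for some $r>2$, which you do not yet have. So the interpolation you describe as ``an absorbable structure'' is, as stated, not absorbable. One could patch this with a more delicate logarithmic/Trudinger--Moser type argument, but that is a substantial detour and entirely unnecessary: the direct application of Lemma~\ref{lm10}, which is designed precisely for this step, settles the lemma immediately and handles all the boundary-term issues you flag inside the black box.
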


\begin{proof}
Applying Lemma \ref{lm10} and Lemma \ref{lm13}, the desired result is obtained.
\end{proof}

Combining Lemma \ref{lm5} and Lemma \ref{lm11}, we get the following result.

\begin{lemma}\label{lm14}
Let $(u,v,w)$ be a solution of \eqref{1}. There exists a constant $C>0$ such that for any $p\geq2$, we have
$$\frac{d}{dt}\int_\Omega u^p+p(p-1)\int_\Omega u^p\leq Cp(p-1)\left(1+p\right)^{\frac{6n}{4-n}}\left(\int_\Omega u^{\frac{p}{2}}\right)^2\ \ \text{for all}\ \ t\in(0,T_{max}).$$
\end{lemma}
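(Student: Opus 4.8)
The plan is to start from the differential inequality provided by Lemma~\ref{lm5}, add $p(p-1)\int_\Omega u^p$ to both sides, and then bound the resulting right-hand side so that its ``gradient part'' is absorbed by the dissipation term $cp(p-1)\int_\Omega u^{p-2}|\nabla u|^2$ while the remainder is a multiple of $\big(\int_\Omega u^{p/2}\big)^2$. It is convenient to work with the substitution $\psi:=u^{p/2}$, for which $\int_\Omega u^{p-2}|\nabla u|^2=\tfrac{4}{p^2}\int_\Omega|\nabla\psi|^2$, $\int_\Omega u^p=\|\psi\|_{L^2}^2$ and $\int_\Omega u^{p/2}=\|\psi\|_{L^1}$; in particular the dissipation on the left reads $\tfrac{4c(p-1)}{p}\|\nabla\psi\|_{L^2}^2$. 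After adding $p(p-1)\int_\Omega u^p$, the right-hand side of Lemma~\ref{lm5} becomes $Cp(p-1)\int_\Omega u^p|\nabla v|^2+(C+1)p(p-1)\|\psi\|_{L^2}^2$, so it suffices to control these two terms.

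For the first term I would apply the Cauchy--Schwarz inequality, $\int_\Omega u^p|\nabla v|^2\le\big(\int_\Omega u^{2p}\big)^{1/2}\big(\int_\Omega|\nabla v|^4\big)^{1/2}$, and then invoke Lemma~\ref{lm11} (which is where $n\le 2$ enters) to bound $\|\nabla v\|_{L^4}$ by a constant; hence $\int_\Omega u^p|\nabla v|^2\le C\|\psi\|_{L^4}^2$. The Gagliardo--Nirenberg inequality gives $\|\psi\|_{L^4}^2\le C_{\mathrm{GN}}\big(\|\nabla\psi\|_{L^2}^{2b}\|\psi\|_{L^1}^{2(1-b)}+\|\psi\|_{L^1}^2\big)$ with $b=\tfrac{3n}{2(n+2)}\in(0,1)$ for $n\le 2$, the constant $C_{\mathrm{GN}}$ depending only on $\Omega$ and $n$ since the exponents $4,2,1$ are fixed. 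By Young's inequality $\|\nabla\psi\|_{L^2}^{2b}\|\psi\|_{L^1}^{2(1-b)}\le\varepsilon\|\nabla\psi\|_{L^2}^2+C\varepsilon^{-b/(1-b)}\|\psi\|_{L^1}^2$, and choosing $\varepsilon$ of order $p^{-2}$ --- small enough that $Cp(p-1)\,C_{\mathrm{GN}}\varepsilon\le\tfrac{2c(p-1)}{p}$ --- absorbs the gradient contribution into the left-hand side while producing the factor $\varepsilon^{-b/(1-b)}\sim p^{2b/(1-b)}=p^{6n/(4-n)}$, because $\tfrac{b}{1-b}=\tfrac{3n}{4-n}$.

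The lower-order term $(C+1)p(p-1)\|\psi\|_{L^2}^2$ is handled the same way: Gagliardo--Nirenberg with $a=\tfrac{n}{n+2}$ gives $\|\psi\|_{L^2}^2\le C\big(\|\nabla\psi\|_{L^2}^{2a}\|\psi\|_{L^1}^{2(1-a)}+\|\psi\|_{L^1}^2\big)$, and Young's inequality with $\varepsilon\sim p^{-2}$ absorbs the gradient part at the cost of a factor $p^{2a/(1-a)}=p^{n}$, which is dominated by $(1+p)^{6n/(4-n)}$ since $n\le\tfrac{6n}{4-n}$. Collecting everything, all the $\|\nabla\psi\|_{L^2}^2$-contributions on the right can be made $\le\tfrac{4c(p-1)}{p}\|\nabla\psi\|_{L^2}^2$ and absorbed; discarding the remaining non-negative dissipation term then yields
\[ \frac{d}{dt}\int_\Omega u^p+p(p-1)\int_\Omega u^p\le Cp(p-1)(1+p)^{\frac{6n}{4-n}}\Big(\int_\Omega u^{p/2}\Big)^2, \]
as claimed.

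The main technical point I expect is the careful tracking of the $p$-dependence: one must make sure the Gagliardo--Nirenberg constants and the interpolation exponents $a,b$ depend only on $n$, and then follow how the choice $\varepsilon\sim p^{-2}$ --- dictated by the need to dominate the $p(p-1)$-weighted dissipation --- propagates through Young's inequality to produce exactly the power $(1+p)^{6n/(4-n)}$. The use of the H\"{o}lder pair $(2,2)$ for the term $u^p|\nabla v|^2$ is precisely what fixes this exponent; a finer splitting would give a smaller (still admissible) power but is not needed here.
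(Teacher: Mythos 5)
Your proposal is correct and takes essentially the same approach as the paper: substitute $\psi=u^{p/2}$, use the $L^4$ bound on $\nabla v$ from Lemma~\ref{lm11}, interpolate $\|\psi\|_{L^4}$ between $\|\nabla\psi\|_{L^2}$ and $\|\psi\|_{L^1}$ by Gagliardo--Nirenberg, and absorb the gradient contribution via Young's inequality with $\varepsilon\sim p^{-2}$ to produce the factor $p^{6n/(4-n)}$. The only cosmetic difference is that the paper first bounds $\int_\Omega u^p\le|\Omega|^{1/2}\big(\int_\Omega u^{2p}\big)^{1/2}$ so that both right-hand terms collapse into a single $\|\psi\|_{L^4}^2$ and Gagliardo--Nirenberg is applied once, whereas you interpolate $\|\psi\|_{L^2}$ separately (getting the subordinate power $p^{n}$); the resulting exponent and constant are the same.
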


\begin{proof}
From Lemma \ref{lm5}, we can find constants $C_1,C_2>0$ such that
\begin{equation}\label{15}
\frac{d}{dt}\int_\Omega u^p+C_1\frac{p-1}{p}\int_\Omega|\nabla u^{\frac{p}{2}}|^2\leq C_2p(p-1)\int_\Omega u^p|\nabla v|^2+C_2p(p-1)\int_\Omega u^p.
\end{equation}
By means of Lemma \ref{lm11} and Young's inequality, there exists a constant $C_3>0$ such that
$$C_2p(p-1)\int_\Omega u^p|\nabla v|^2\leq C_2p(p-1)\left(\int_\Omega u^{2p}\right)^{\frac{1}{2}}\left(\int_\Omega |\nabla v|^4\right)^{\frac{1}{2}}\leq C_3p(p-1)\left(\int_\Omega u^{2p}\right)^{\frac{1}{2}}$$
and
$$(C_2+1)p(p-1)\int_\Omega u^p\leq C_3p(p-1)\left(\int_\Omega u^{2p}\right)^{\frac{1}{2}},$$
which combined with \eqref{15} gives that
\begin{equation}\label{17}
\frac{d}{dt}\int_\Omega u^p+C_1\frac{p-1}{p}\int_\Omega|\nabla u^{\frac{p}{2}}|^2+p(p-1)\int_\Omega u^p\leq2C_3p(p-1)\left(\int_\Omega u^{2p}\right)^{\frac{1}{2}}.
\end{equation}
The Gagliardo-Nirenberg inequality and Young's inequality with $\varepsilon$ provide constants $C_4,C_5>0$ such that
\begin{equation}\label{16}
\begin{aligned}
2C_3\left(\int_\Omega u^{2p}\right)^{\frac{1}{2}}=&2C_3\|u^{\frac{p}{2}}\|_{L^4}^2\\
\leq& C_4\left(\|\nabla u^{\frac{p}{2}}\|_{L^2}^{\frac{3n}{2(n+2)}}\|u^{\frac{p}{2}}\|_{L^1}^{\frac{4-n}{2(n+2)}}+\|u^{\frac{p}{2}}\|_{L^1}\right)^2\\
\leq&2C_4\left(\|\nabla u^{\frac{p}{2}}\|_{L^2}^{\frac{3n}{n+2}}\|u^{\frac{p}{2}}\|_{L^1}^{\frac{4-n}{n+2}}+\|u^{\frac{p}{2}}\|^2_{1}\right)\\
\leq&C_1\frac{1}{2p^2}\|\nabla u^{\frac{p}{2}}\|^2_{L^2}+C_5\left(1+p^{\frac{6n}{4-n}}\right)\|u^{\frac{p}{2}}\|_{L^1}^2.
\end{aligned}
\end{equation}
Substituting \eqref{16} into \eqref{17} and noting $1+p^{\frac{6n}{4-n}}\leq(1+p)^{\frac{6n}{4-n}}$, we obtain
$$\frac{d}{dt}\int_\Omega u^p+p(p-1)\int_\Omega u^p\leq C_5p(p-1)\left(1+p\right)^{\frac{6n}{4-n}}\left(\int_\Omega u^{\frac{p}{2}}\right)^2.$$
Then we get the desired result.
\end{proof}

Now we can obtain the uniform-in-time boundedness of $u$ in $L^\infty(\Omega)$.

\begin{lemma}\label{lm17}
Let $(u,v,w)$ be a solution of \eqref{1}. There exists a constant $C>0$ such that
$$\|u\|_{L^\infty}\leq C\ \ \text{for all}\ \ t\in(0,T_{max}).$$
\end{lemma}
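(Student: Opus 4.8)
The plan is to run a Moser-type iteration on the $L^p$ norms of $u$, using the differential inequality from Lemma~\ref{lm14} as the engine. First I would fix the recursion: for $p_k = 2^k$, $k\geq 1$, set $M_k := \sup_{t\in(0,T_{max})}\int_\Omega u^{p_k}$ (which is finite for each fixed $k$ by the regularity in Lemma~\ref{lm3} on bounded time intervals, but whose bound we must control uniformly). Applying Lemma~\ref{lm14} with $p=p_k$ gives
\begin{equation*}
\frac{d}{dt}\int_\Omega u^{p_k}+p_k(p_k-1)\int_\Omega u^{p_k}\leq C p_k(p_k-1)(1+p_k)^{\frac{6n}{4-n}}\Big(\int_\Omega u^{p_k/2}\Big)^2,
\end{equation*}
and since $\big(\int_\Omega u^{p_k/2}\big)^2 = \big(\int_\Omega u^{p_{k-1}}\big)^2 \leq M_{k-1}^2$, an ODE comparison (or Lemma~\ref{lm16}) yields
\begin{equation*}
M_k \leq \max\Big\{\int_\Omega u_0^{p_k},\ C(1+p_k)^{\frac{6n}{4-n}} M_{k-1}^2\Big\}.
\end{equation*}

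Next I would iterate this bound. Taking logarithms of $N_k := \max\{M_k, 1\}$ and writing $a_k := 2^{-k}\log N_k$, the recursion becomes $a_k \leq a_{k-1} + 2^{-k}\big(\log C + \tfrac{6n}{4-n}\log(1+2^k)\big)$, whose right-hand increment is summable in $k$ because $2^{-k}\log(1+2^k) = O(k 2^{-k})$. Hence $a_k$ stays bounded, i.e. $N_k^{1/p_k} = e^{a_k} \leq K$ for a constant $K$ independent of $k$ and of $T_{max}$, so that $\|u(\cdot,t)\|_{L^{p_k}} \leq K$ uniformly. Letting $k\to\infty$ gives $\|u(\cdot,t)\|_{L^\infty}\leq K$ for all $t\in(0,T_{max})$. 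One subtlety to record carefully: the ``seed'' $M_0 = M_1 = \sup_t \int_\Omega u^2$ is finite and uniform by Lemma~\ref{lm13}, which is exactly where the positive lower and upper bounds of $v$ (hence the non-degeneracy of $\gamma(v)$) enter through Lemma~\ref{lm6} and Lemma~\ref{lm5}; and the initial-data terms $\int_\Omega u_0^{p_k}$ are harmless since $u_0\in C^0(\overline\Omega)$ gives $\big(\int_\Omega u_0^{p_k}\big)^{1/p_k}\leq \|u_0\|_{L^\infty}|\Omega|^{1/p_k}$, bounded uniformly in $k$.

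The main obstacle is purely bookkeeping rather than conceptual: one must verify that the constant $C$ in Lemma~\ref{lm14} is genuinely independent of $p$ and $T_{max}$ (it is, being built from the Gagliardo--Nirenberg and Sobolev constants on $\Omega$, the $L^1$ bound of $u$ from Lemma~\ref{lm12}, and the $v$-bounds), and that the polynomial-in-$p$ factor $(1+p)^{6n/(4-n)}$ indeed contributes only a convergent series after the logarithmic rescaling. A secondary point is ensuring $M_k<\infty$ a priori for each $k$ before running the iteration, so the inequalities are not vacuous; this follows from parabolic smoothing on $(0,T_{max})$ together with the already-established lower-order estimates, after which the uniform bound is propagated by the recursion. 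With $\|u\|_{L^\infty}$ in hand, standard parabolic regularity applied to the $v$- and $w$-equations upgrades to the $W^{1,\infty}$ bounds, and the extensibility criterion in Lemma~\ref{lm3} forces $T_{max}=+\infty$, completing the proof of Theorem~\ref{th1}.
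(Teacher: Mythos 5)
Your proposal is correct and follows essentially the same route as the paper: both start from the differential inequality in Lemma~\ref{lm14} with seed $L^2$-bound from Lemma~\ref{lm13}, integrate in time to get a recursion on $\sup_t\int_\Omega u^{2^k}$, and run the standard Alikakos--Moser iteration to pass to $L^\infty$. The only difference is that you spell out the iteration (logarithmic rescaling, summability of the increments $2^{-k}\log(1+2^k)$) where the paper simply cites the references \cite{N.D.A.1979CPDE,T-W2013MMMAS}.
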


\begin{proof}
According to Lemma \ref{lm14}, there exists a constant $C_1>0$ such that for any $p\geq2$
$$\frac{d}{dt}\int_\Omega u^p+p(p-1)\int_\Omega u^p\leq C_1p(p-1)\left(1+p\right)^{\frac{6n}{4-n}}\left(\int_\Omega u^{\frac{p}{2}}\right)^2$$
which gives
\begin{equation}\label{31}
\frac{d}{dt}\left[e^{p(p-1)t}\int_\Omega u^p\right]\leqslant C_1e^{p(p-1)t}p(p-1)(1+p)^6\left(\int_\Omega u^{\frac{p}{2}}\right)^2.
\end{equation}
Integrating \eqref{31} over the time interval $[0,t]$ for $0<t<T_{max}$, we get
$$\int_\Omega u^p\leq\int_\Omega u_0^p+C_1(1+p)^6\sup_{0\leq t\leq T_{max}}\left(\int_\Omega u^{\frac{p}{2}}\right)^2.$$
Then, employing a standard Moser iteration (cf. \cite{N.D.A.1979CPDE}) or the similar argument as in \cite{T-W2013MMMAS}, the desired result can be obtained.
\end{proof}

\section{Proof of Theorem \ref{th1}}\label{sec4}


\begin{proof}[Proof of Theorem \ref{th1}]
Theorem \ref{th1} is a consequence of Lemma \ref{lm17}, Lemma \ref{lm10} and the extensibility criterion Lemma \ref{lm3}.
\end{proof}

\bigbreak

\noindent \textbf{Acknowledgement}.
The research of Z.A. Wang was supported by the Hong Kong RGC GRF grant No. 15303019 (Project ID P0030816) and an internal grant No. UAH0 (Project ID P0031504) from the Hong Kong Polytechnic University.


\end{document}